\documentclass{amsart}
\usepackage[utf8]{inputenc}

\def\NZQ{\mathbb}               
\def\NN{{\NZQ N}}
\def\QQ{{\NZQ Q}}

\def\RR{{\NZQ R}}
\def\CC{{\NZQ C}}
\def\F2{{\NZQ F}_2}

\def\AA{{\NZQ A}}

%
%

%
%

%
\def\opn#1#2{\def#1{\operatorname{#2}}} 
%
\opn\chara{char} \opn\length{\ell} \opn\pd{pd} \opn\rk{rk}
\opn\projdim{proj\,dim} \opn\injdim{inj\,dim} \opn\rank{rank}
\opn\depth{depth} \opn\codepth{codepth} \opn\grade{grade}
\opn\height{height} \opn\embdim{emb\,dim} \opn\codim{codim}

\opn\Tr{Tr} \opn\bigrank{big\,rank}
\opn\superheight{superheight}\opn\lcm{lcm}
\opn\trdeg{tr\,deg}%
\opn\reg{reg} \opn\lreg{lreg} \opn\skel{skel}
\opn\Gr{Gr}
\opn\ann{ann}
\opn\sign{sign}
\opn\del{del}
\opn\lex{lex}

%
\opn\div{div} \opn\Div{Div} \opn\cl{cl} \opn\Cl{Cl}
%
%
\opn\Spec{Spec} \opn\Supp{Supp} \opn\supp{supp} \opn\Sing{Sing}
\opn\Ass{Ass}\opn\fdepth{fdepth}
%
%
\opn\Ann{Ann} \opn\Rad{Rad} \opn\Soc{Soc}
%
%
\opn\Sym{Sym} \opn\Ker{Ker} \opn\Coker{Coker} \opn\Im{Im}
\opn\Hom{Hom} \opn\Tor{Tor} \opn\Ext{Ext} \opn\End{End}
\opn\Aut{Aut} \opn\id{id} \opn\ini{in} \opn\tr{tr}

\opn\nat{nat}\opn\it{it}
\opn\pff{proof}
\opn\Pf{proof} \opn\GL{GL} \opn\SL{SL} \opn\mod{mod} \opn\ord{ord}
%
%
\opn\aff{aff} \opn\con{conv} \opn\relint{relint} \opn\st{st}
\opn\lk{lk} \opn\cn{cn} \opn\core{core} \opn\vol{vol}
\opn\link{link} \opn\star{star} \opn\skel{skel} \opn\indeg{indeg}
\opn\Ass{Ass} \opn\Min{Min} \opn\sdepth{sdepth} \opn\depth{depth}
\opn\gr{gr}

%
%

\def\pot#1#2{#1[\kern-0.28ex[#2]\kern-0.28ex]}

%
%
\opn\dirlim{\underrightarrow{\lim}}
\opn\inivlim{\underleftarrow{\lim}}
%
%
%

%
%

\def\Implies{\ifmmode\Longrightarrow \else
     \unskip${}\Longrightarrow{}$\ignorespaces\fi}
\def\implies{\ifmmode\Rightarrow \else
     \unskip${}\Rightarrow{}$\ignorespaces\fi}
\def\iff{\ifmmode\Longleftrightarrow \else
     \unskip${}\Longleftrightarrow{}$\ignorespaces\fi}

\let\:=\colon
\theoremstyle{plain}
\newtheorem{Theorem}{Theorem}[section]
 \newtheorem{Lemma}[Theorem]{Lemma}
 \newtheorem{Corollary}[Theorem]{Corollary}
 \newtheorem{Proposition}[Theorem]{Proposition}

 \theoremstyle{definition}
 \newtheorem{Definition}[Theorem]{Definition}
 \newtheorem{Remark}[Theorem]{Remark}
 
 \newtheorem{Example}[Theorem]{Example}

%
%
\let\epsilon\varepsilon
\let\kappa=\varkappa
%
%
\textwidth=15cm \textheight=22cm \topmargin=0.5cm
\oddsidemargin=0.5cm \evensidemargin=0.5cm \pagestyle{plain}
%
%
%
\opn\dis{dis}
\def\pnt{{\raise0.5mm\hbox{\large\bf.}}}

\opn\Lex{Lex}


\newcommand{\rad}{1.5 pt}
\usepackage{subcaption,tikz}
\usetikzlibrary{patterns}

\usepackage{float}
\usepackage{tikz-cd}
\usepackage{mathtools}
\usepackage{amsfonts}
\usepackage{amssymb}
\newcommand{\PP}{\mathcal{P}}
\renewcommand{\AA}{\mathcal{A}}
\newcommand{\BB}{\mathcal{B}}

\newcommand{\EE}{\mathcal{E}}
\renewcommand{\CC}{\mathcal{C}}

\renewcommand{\RR}{\mathcal{R}}
\renewcommand{\SS}{\mathcal{S}}

\renewcommand{\QQ}{\mathcal{Q}}

\usepackage{enumitem}

\title{The Stanley--Reisner ideal of the rook complex of polyominoes}
\author{Francesco Romeo}
\date{July 2022}

\begin{document}

\maketitle
\begin{abstract}
    We study the properties of the rook complex $\RR$ of a polyomino $\PP$ seen as independence complex of a graph $G$, and the associated Stanley-Reisner ideal $I_\RR$. In particular, we characterise the polyominoes $\PP$ having a pure rook complex, and the ones whose Stanley-Reisner ideal has linear resolution. Furthermore, we prove that for a class of polyominoes the Castelnuovo-Mumford regularity of $I_\RR$ coincides with the induced matching number of $G$.
\end{abstract}
\section{Introduction}

Polyominoes are two-dimensional objects obtained by joining edge by edge squares of same size. Originally, polyominoes appeared in mathematical recreations \cite{Go}, but it turned out that they have applications in various fields, for example, theoretical physics and bio-informatics. Among the most popular topics in combinatorics related to polyominoes one finds enumerating polyominoes of given size, including the asymptotic growth of the numbers of polyominoes, tiling problems, and reconstruction of polyominoes. The actual research on polyominoes under an algebraic point of view focuses on the study of the polyomino ideal, a quadratic binomial ideal associated to the geometry of polyominoes (see \cite{Qu,QSS,MRR1,MRR2,CN,RR,QRR,CNU}). In the last three papers, the authors compute some algebraic invariants of the polyomino ideal by studying the rook polynomial $\sum_{i=1}^n r_it^i$, i.e.  the polynomial whose coefficient $r_i$ represents the number of distinct ways of arranging $i$ rooks on squares of $\PP$ in non-attacking positions. The degree of such polynomial is called \emph{rook number} and it is denoted by $r(\PP)$. The rook arrangements described above give rise to a simplicial complex, called \emph{rook complex}. In this paper, by focusing on the rook complex, we study polyominoes under a monomial point of view, as described below.

Let $\Delta$ be a simplicial complex on vertices $\{1,\ldots, n\}$ and let $R= K[x_1, \dots, x_{n}]$ be the polynomial ring on $n$ variables over a field $K$. The \emph{Stanley-Reisner ideal} or face ideal, denoted by $I_\Delta$, is known to be the ideal generated by the sqaure-free monomials $\{x_{i_1},\ldots x_{i_r}\}$ such that $\{i_1,\ldots,i_r\} \notin \Delta$. Let $G$ be a graph on vertices $\{1,\ldots, n\}$ and let $R= K[x_1, \dots, x_{n}]$ be the polynomial ring on $n$ variables over a field $K$. The \textit{edge ideal} of $G$, denoted by $I(G)$, is the ideal of $R$ generated by all square-free monomials $x_i x_j$ such that $\{i,j\} \in E(G)$. Edge ideals of graphs have been introduced by Villarreal \cite{Vi0} in 1990, where he studied the Cohen--Macaulay property of such ideals. Many authors have focused their attention on such ideals (e.g.\cite{HH}, \cite{CRT}). 

The two above concepts have a nice relationship. If $\Delta$ is the independence complex of $G$, i.e. the simplicial complex of the independent sets of $G$, then it holds $I(G)=I_\Delta.$ For such a reason, it is reasonable to study the Stanley-Reisner ideal of the rook complex of polyominoes. Let $\PP$ be a polyomino, let $\RR$ be its rook complex and let $I_\RR$ be the Stanley-Reisner ideal of $\RR$. 
Let $G_\PP$ be the graph on the cells of $\PP$ having $\RR$ as independence complex. It follows that $V(G_\PP)=\{C\}_{C \in \PP}$ and 
\[
E(G_\PP)=\{\{C,D\}: C \mbox{ and } D \mbox{ lie on the same row or column}\}.
\]

Some challenging problems in the modern research are the classification of Cohen-Macaulay rings and the study of the minimal free resolution and Castelnuovo-Mumford regularity. 
In Section \ref{sec:pure}, we characterise the polyominoes having a pure rook complex, i.e. all the maximal faces have the same cardinality, because the pureness is a necessary condition for the Cohen-Macaulayness. 
For the aim of studying minimal free resolution, in Section  \ref{sec:chord} we characterise the polyominoes for which $I(G_\PP)$ has linear resolution in terms of the chordality of the complement graph $\bar{G}_\PP$ (see Theorem \ref{fr}. We call such polyominoes \emph{brush} polyominoes because of their nice structure: a long interval (the handle) with dominoes as bristles. In Section \ref{sec:cmreg} we consider brush polyominoes with longer bristles and for this class we prove that the Castelnuovo-Mumford regularity of $I(G_\PP)$ coincides with the induced matching number of $G_\PP$.

\section{Preliminaries}

In this section we recall some concepts and notations on graphs and on simplicial complexes that we will use in the article. 

\subsection{Polyominoes}
In this subsection, we recall general definitions and notation on polyominoes. 

Let $a = (i, j), b = (k, \ell) \in \NN^2$, with $i	\leq k$ and $j\leq\ell$. The set $[a, b]=\{(r,s) \in \NN^2 : i\leq r \leq k \text{ and } j \leq s \leq \ell\}$ is called an \textit{interval} of $\NN^2$. Moreover, if $i<k$ and $j < \ell$, then $[a,b]$ is called a \textit{proper interval}, and the elements $a,b,c,d$ are called corners of $[a,b]$, where $c=(i,\ell)$ and $d=(k,j)$. In particular, $a,b$ are the \textit{diagonal corners} and $c,d$ are the \textit{anti-diagonal corners} of $[a,b]$. The corner $a$ (resp. $c$) is also called the left lower (resp. upper) corner of $[a,b]$, and $d$ (resp. $b$) is the right lower (resp. upper) corner of $[a,b]$. A proper interval of the form $C = [a, a + (1, 1)]$ is called a \textit{cell}. The corners of $C$ are called the vertices of $C$. The set of vertices of $C$ is denoted by $V(C)$. The edge set of $C$, denoted by $E(C)$, is 
\[
 \{\{a,a+(1,0)\}, \{a,a+(0,1)\},\{a+(1,0),a+(1,1)\},\{a+(0,1),a+(1,1)\} \}.
\]
We denote by $\ell (C)$, the left lower corner of a cell $C$.  

Let $\PP$ be a finite collection of cells of $\NN^2$, and let $C$ and $D$ be two cells of $\PP$. Then $C$ and $D$ are said to be \textit{connected}, if there is a sequence of cells $C = C_1,\ldots, C_m = D$ of $\PP$ such that $C_i\cap C_{i+1}$ is an edge of $C_i$
for $i = 1,\ldots, m - 1$. In addition, if $C_i \neq C_j$ for all $i \neq j$, then $C_1,\dots, C_m$ is called a \textit{path} (connecting $C$ and $D$). A collection of cells $\PP$ is called a \textit{polyomino} if any two cells of $\PP$ are connected. We denote by $V(\PP)=\cup _{C\in \PP} V(C)$ the vertex set of $\PP$ and by $E(\PP)=\cup _{C\in \PP} E(C)$ the edge set of $\PP$. In particular, a polyomino could be also seen as a connected bipartite graph. Note that, if $a,b\in V(\PP)$, then $a$ and $b$ are connected in $V(\PP)$ by a path of edges. More precisely,  one can find  a sequence of vertices $a=a_1,\ldots, a_{n}=b$ such that $\{a_i, a_{i+1}\} \in E(\PP)$, for all $i=1, \ldots, n-1$.  The number of cells of $\PP$ is called the \textit{rank} of $\PP$, and we denote it by $\rk \PP$. We also define the \emph{lower left corner} of $\PP$ as $\ell (\PP)= \min \{ \ell (C) : C \in \PP \}$. Each proper interval  $[(i,j),(k,l)] $ in $\NN^2$ can be identified as a polyomino and it is referred to as {\em rectangular} polyomino, or simply as rectangle. If $s=k-i$ and $t=l-j$ we say that the rectangle has {\em size} $s \times t$. In particular, given a rectangle of $\PP$ we call \emph{diagonal cells} the cells $A,B$ such that $\ell(A)=(i,j)$ and $\ell(B)=(k-1,l-1)$ and \emph{antidiagonal cells} the cells $C,D$ such that $\ell(C)=(i,l-1)$ and $\ell(D)=(k-1,j)$.

A polyomino $\PP$ is called a \emph{subpolyomino} of $\PP'$, if all cells of $\PP$ are contained in $\PP'$. Given a polyomino $\PP$, the smallest rectangle (with respect to its size) containing $\PP$ as a subpolyomino, is called the \emph{bounding box} of $\PP$.

We say that a polyomino $\PP$ is \textit{simple} if for any two cells $C$ and $D$ of $\NN^2$ not belonging to $\PP$, there exists a path $C=C_{1},\dots,C_{m}=D$ such that $C_i \notin \PP$ for any $i=1,\dots,m$. Roughly speaking, a polyomino without a ``hole'' is called a simple polyomino. We say that a polyomino $\PP$  is \emph{thin} if $\PP$ does not contain the square tetromino (see Figure \ref{fig:square}) as a subpolyomino.
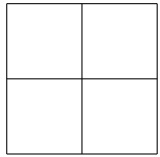
\begin{figure}[H]
\centering
\begin{tikzpicture}
\draw (0,0)--(2,0);
\draw (0,1)--(2,1);
\draw (0,2)--(2,2);

\draw (0,0)--(0,2);
\draw (1,0)--(1,2);
\draw (2,0)--(2,2);
\end{tikzpicture}\caption{The square tetromino}\label{fig:square}
\end{figure}

An interval $[a,b]$ with $a = (i,j)$ and $b = (k, \ell)$ is called a \emph{horizontal edge interval} of $\PP$ if $j =\ell$ and the sets $\{(r, j), (r+1, j)\}$ for
$r = i, \dots, k-1$ are edges of cells of $\PP$. If a horizontal edge interval of $\PP$ is not strictly contained in any other horizontal edge interval of $\PP$, then we call it \emph{maximal horizontal edge interval}. Similarly, one defines vertical edge intervals and maximal vertical edge intervals of $\PP$. 

A polyomino $\PP$ is called \emph{row convex} if for any two of its cells with lower left corners $a=(i,j)$ and $b=(k,j)$, with $k>i$, all cells with lower left corners $(l,j)$ with $i\leq l \leq k$ are cells of $\PP$. Similarly,  $\PP$ is called \emph{column convex} if for any two of its cells with lower left corners $a=(i,j)$ and $b=(i,k)$, with $k>j$, all cells with lower left corners $(i,l)$ with $j\leq l \leq k$ are cells of $\PP$. If a polyomino $\PP$ is simultaneously row and column convex then $\PP$ is called \emph{convex}. Let $\mathcal{C}:C_1, C_2, \ldots, C_m$ be a path of cells and $(i_k, j_k)$ be the lower left corner of $C_k$ for $1 \leq k \leq m$. Then $\mathcal{C}$ has a change of direction at $C_k$ for some $2 \leq k \leq m-1$ if $i_{k-1}\neq i_{k+1}$ and $j_{k-1} \neq j_{k+1}$.
A convex polyomino $\PP$ is called $k$-convex if any two cells in $\PP$ can be connected by a path of cells in $\PP$ with at most $k$ change of directions.

\subsection{Graphs and simplicial complexes}
\noindent Set $V = \{x_1, \ldots, x_n\}$. A \textit{simplicial complex} $\Delta$ on the vertex set $V$ is a collection of subsets of $V$ such that: 1) $\{x_i\} \in \Delta$ for all $x_i \in V$; 2) $F \in \Delta$ and $G\subseteq F$ imply $G \in \Delta$.
An element $F \in \Delta$ is called a \textit{face} of $\Delta$. A maximal face of $\Delta$ with respect to inclusion is called a \textit{facet} of $\Delta$.

\noindent
The dimension of a face $F \in \Delta$ is $\dim F = |F|-1$, and the dimension of $\Delta$ is the maximum of the dimensions of all facets.
Moreover, if all the facets of $\Delta$ have the same dimension, then we say that $\Delta$ is \emph{pure}. Let $d-1$ the dimension of $\Delta$ and let $f_i$ be the number of faces of $\Delta$ of dimension $i$ with the convention that $f_{-1}=1$. Then the $f$-vector of $\Delta$ is the $(d+1)$-tuple $f(\Delta)=(f_{-1},f_0,\ldots,f_{d-1})$. The $h$-vector of $\Delta$ is $h(\Delta)=(h_0,h_1,\ldots,h_d)$ with
\begin{equation} \label{eq:hf}
 h_k=\sum_{i=0}^{k}(-1)^{k-i}\binom{d-i}{k-i} f_{i-1}. 
\end{equation}
Similarly, one can express the entries of $f$-vector by the entries of the $h$-vector, in fact for $i=0,\ldots, d$
\begin{equation}\label{eq:fh}
    f_{i-1}=\sum\limits_{k=0}^i \binom{d-k}{i-k} h_k
\end{equation}

It follows that $f(\RR)=(f_{-1},\ldots, f_{d-1})$ where $f_{i-1}=r_i$ and $d=r(\PP)$. 

Let $\Delta$ be a pure independence complex of a graph $G$.
We say that $\Delta$ is \emph{vertex decomposable} if one of the following conditions hold:
(1) $n=0$ and $\Delta=\{\varnothing\}$; (2)  $\Delta$ has a unique maximal facet $\{x_{0},\ldots,x_{n-1} \}$;  (3) There exists $x \in V(G)$ such that both $\link_{\Delta}(x)$ and $\del_{\Delta}(x)$ are vertex decomposable and the facets of $\del_{\Delta}(x)$ are also facets in $\Delta$.\\
We say that $\Delta$ is \emph{Cohen-Macaulay} if for any $F \in \Delta$ we have that $\dim_K \widetilde{H}_{i}(\link_{\Delta}(F),K)=0$ for any $i < \dim \link_{\Delta}(F)$. In particular, $\Delta$ is Cohen-Macaulay if and only if $R/I_{\Delta}$ is a Cohen-Macaulay ring (see \cite{BH2}). It is well known that 
\[
\Delta \ \mbox{Vertex Decomposable} \Rightarrow \Delta \ \mbox{Cohen-Macaulay} \ \Rightarrow \Delta \ \mbox{Pure}.
\]
Let $\mathbb{F}$ be the minimal free resolution of $R/I(G)$. Then 
\[
\mathbb{F} \ : \ 0 \rightarrow F_{p} \rightarrow F_{p-1} \rightarrow \ldots \rightarrow F_{0} \rightarrow R/I(G)\rightarrow 0
\]
where $F_{i}=\bigoplus\limits_j R(-j)^{\beta_{i,j}}$. The $\beta_{i,j}$ are called the \emph{Betti numbers} of $\mathbb{F}$.
For any $i$, $\beta_{i}=\sum_{j} \beta_{i,j}$ is called the $i$-th \emph{total Betti number}.
The \emph{Castelnuovo-Mumford regularity} of $R/I(G)$, denoted by $\mbox{reg}\  R/I(G)$ is defined as 
\[
\mbox{reg} \ R/I(G)=\max\{j-i: \beta_{i,j}\neq 0 \}.
\]

Let $G$ be a graph. A collection $C$ of edges in $G$ is called an \emph{induced matching} of $G$ if the edges of $C$ are pairwise disjoint and the graph having $C$ has edge set is an induced subgraph of $G$. The maximum size of an induced matching of $G$ is called \emph{induced matching number} of $G$ and we denote it by $\nu(G)$. The \textit{complement graph} $\bar{G}$ of $G$ is the graph whose vertex set is $V(G)$ and whose edges are the non-edges of $G$. We conclude the section by stating some known results relating chordality and induced matching number to the Castelnuovo-Mumford regularity. The first one is due to Fr\"oberg (\cite[Theorem 1]{Fr})
\begin{Theorem}\label{fr}
Let $G$ be a graph. Then $\reg R/I(G) \leq 1$ if and only if $\bar{G}$ is chordal. 
\end{Theorem}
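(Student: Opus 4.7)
The plan is to invoke Hochster's formula for Betti numbers of Stanley--Reisner rings. Since $I(G)=I_\Delta$ for $\Delta$ the independence complex of $G$, which coincides with the clique (flag) complex of $\bar G$, Hochster's formula reads
\[
\beta_{i,i+j}(R/I(G))=\sum_{W\subseteq V(G),\ |W|=i+j}\dim_K \widetilde H_{j-1}(\Delta_W;K),
\]
where $\Delta_W$ denotes the subcomplex of $\Delta$ induced on $W$, equivalently the clique complex of $\bar G|_W$. In these terms $\reg R/I(G)\le 1$ is equivalent to the vanishing of $\widetilde H_{j-1}(\Delta_W;K)$ for every $W\subseteq V(G)$ and every $j\ge 2$; that is, every induced subcomplex of $\Delta$ is acyclic in positive degrees.

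For the necessity I would argue by contrapositive. If $\bar G$ is not chordal it contains an induced cycle $C$ of length $k\ge 4$ on some vertex set $W\subseteq V(G)$; since $\bar G|_W$ then has no triangle, $\Delta_W$ coincides with $C_k$ regarded as a one-dimensional simplicial complex, so $\widetilde H_1(\Delta_W;K)\ne 0$. Hochster's formula immediately yields $\beta_{k-2,k}(R/I(G))\ne 0$, whence $\reg R/I(G)\ge 2$.

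For sufficiency note that every induced subgraph of a chordal graph is chordal, so it suffices to prove that for any chordal graph $H$ the clique complex $\Delta(H)$ satisfies $\widetilde H_i(\Delta(H);K)=0$ for every $i\ge 1$. I would induct on $|V(H)|$ via Dirac's theorem, which produces a \emph{simplicial vertex} $v$ of $H$, meaning one whose neighborhood $N_H(v)$ is a clique. Then both $\star_{\Delta(H)}(v)$ and $\link_{\Delta(H)}(v)=\Delta(N_H(v))$ are full simplices, hence contractible; moreover $\Delta(H)=\Delta(H\setminus\{v\})\cup \star_{\Delta(H)}(v)$ with intersection equal to the link. The Mayer--Vietoris sequence then forces $\widetilde H_i(\Delta(H);K)\iso \widetilde H_i(\Delta(H\setminus\{v\});K)$ for every $i\ge 1$, and the inductive hypothesis closes the induction.

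The main obstacle is the sufficiency direction, whose core is locating the simplicial vertex and running the Mayer--Vietoris comparison; once Dirac's lemma and the observation that both star and link are simplices are in hand, the induction is immediate. The necessity is a one-line application of Hochster to an induced cycle of length at least four.
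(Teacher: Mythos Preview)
The paper does not prove this theorem; it is quoted in the preliminaries as a known result of Fr\"oberg and used as a black box in Section~\ref{sec:chord}. There is therefore nothing to compare against, but your argument is correct and is essentially the classical proof: Hochster's formula reduces $\reg R/I(G)\le 1$ to the vanishing of $\widetilde H_i(\Delta_W;K)$ for all $i\ge 1$ and all $W\subseteq V(G)$; an induced cycle $C_k$ with $k\ge 4$ in $\bar G$ gives $\widetilde H_1(\Delta_W;K)\neq 0$ for the necessity; and for the sufficiency, Dirac's simplicial-vertex lemma together with the Mayer--Vietoris decomposition $\Delta(H)=\Delta(H\setminus\{v\})\cup\star_{\Delta(H)}(v)$ runs the induction. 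One cosmetic remark: if the simplicial vertex $v$ is isolated in $H$ then $\link_{\Delta(H)}(v)=\{\varnothing\}$ is not literally a full simplex, but in that case $\Delta(H)$ is the disjoint union of a point with $\Delta(H\setminus\{v\})$ and the inductive step is immediate anyway.
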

The second one is due to Katzman  (\cite[Lemma 2.2]{Ka}).
\begin{Theorem}\label{kat}
For any graph $G$, we have $\reg R/I(G)\geq \nu(G)$. 
\end{Theorem}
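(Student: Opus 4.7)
\medskip

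\textbf{Plan of proof.} The natural strategy is to produce a single graded Betti number $\beta_{i,i+\nu(G)}(R/I(G))$ that is nonzero, since this immediately forces $\reg R/I(G) \geq \nu(G)$. I would use Hochster's formula, which for the Stanley--Reisner ring of any simplicial complex $\Delta$ on vertex set $V$ reads
\[
\beta_{i,j}(R/I_{\Delta}) \;=\; \sum_{\substack{W \subseteq V \\ |W| = j}} \dim_{K} \widetilde{H}_{|W|-i-1}(\Delta_{W};\, K),
\]
where $\Delta_{W} = \{F \in \Delta : F \subseteq W\}$ is the restriction of $\Delta$ to $W$. Since $I(G) = I_{\Delta}$ for $\Delta$ the independence complex of $G$, and since $\Delta_{W}$ is then the independence complex of the induced subgraph $G[W]$, the task reduces to finding a vertex subset $W$ whose induced subgraph has an independence complex with nontrivial reduced homology in the right degree.

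Set $n = \nu(G)$ and pick an induced matching $\{e_{1}, \ldots, e_{n}\}$ of $G$ with $e_{k} = \{u_{k}, v_{k}\}$, and let $W = \bigcup_{k=1}^{n} e_{k}$, so that $|W| = 2n$. Since the matching is \emph{induced}, $G[W]$ consists of exactly the $n$ pairwise disjoint edges $e_{1}, \ldots, e_{n}$, so the facets of $\Delta_{W}$ are precisely the $2^{n}$ transversals $\{w_{1}, \ldots, w_{n}\}$ with $w_{k} \in \{u_{k}, v_{k}\}$. As a simplicial complex $\Delta_{W}$ is the join of $n$ copies of the two-point complex (each of which is $S^{0}$), hence the boundary complex of the $n$-dimensional cross-polytope, which is homeomorphic to the sphere $S^{n-1}$.

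Therefore $\widetilde{H}_{n-1}(\Delta_{W};K) \cong K$, and substituting $j = 2n$ and $i = n$ in Hochster's formula yields $\beta_{n,2n}(R/I(G)) \geq 1$. This contributes $2n - n = n$ to the maximum defining the Castelnuovo--Mumford regularity, so $\reg R/I(G) \geq n = \nu(G)$, as claimed. The only slightly delicate point is the topological identification of $\Delta_{W}$ with a sphere; this can be handled either by invoking the general fact that the independence complex of a disjoint union of graphs is the join of the independence complexes together with $S^{0} * \cdots * S^{0} \simeq S^{n-1}$, or more concretely by recognising the facet structure above as the boundary of the cross-polytope. Everything else is a formal application of Hochster's formula, so no further obstacle arises.
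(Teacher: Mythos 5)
The paper does not prove this statement; it is quoted from Katzman \cite[Lemma 2.2]{Ka} as a known result. Your argument is correct and is essentially Katzman's own: restricting to the vertex set $W$ of an induced matching of size $n=\nu(G)$ gives $G[W]$ equal to $n$ disjoint edges, so $\Delta_W$ is the join of $n$ copies of $S^0$, i.e.\ the boundary of the cross-polytope $\simeq S^{n-1}$, and Hochster's formula then yields $\beta_{n,2n}(R/I(G))\neq 0$, hence $\reg R/I(G)\geq n$. All steps check out, including the key use of the matching being \emph{induced} to guarantee that $G[W]$ has no extra edges.
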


\section{Pureness of $\RR$}\label{sec:pure}
In this section, we characterize the polyominoes having a pure rook complex. For this aim, in the following definition, we introduce partitions on polyominoes. From now on, given two cell intervals $I$ and $J$, we write $I\cap J$ to denote the common cells of $I$ and $J$.

\begin{Definition}
Let $\PP$ be a polyomino. A subset $\varnothing \neq \AA \subset \CC$ is called a \emph{partition} of $\PP$ if 
\begin{enumerate}
    \item $\forall I,J \in \AA$ we have $I \cap J = \varnothing$;
    \item $\bigcup_{I \in A} I = \PP$.
\end{enumerate}
\end{Definition}

\begin{Example}\label{exa:part}
A polyomino admits at most two partitions, one horizontal and one vertical. In Figure \ref{fig:part}, the polyomino $\PP_1$ admits two partitions, the polyomino $\PP_2$ admits one partition and the polyomino $\PP_3$ admits no partition.
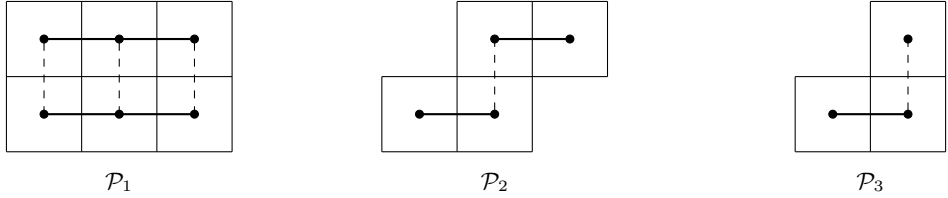
\begin{figure}[H]
    \centering 
    \begin{subfigure}{0.333\textwidth}
\centering
\begin{tikzpicture}
\draw (0,0)--(3,0);
\draw (0,1)--(3,1);
\draw (0,2)--(3,2);

\draw (0,0)--(0,2);
\draw (1,0)--(1,2);
\draw (2,0)--(2,2);
\draw (3,0)--(3,2);

\filldraw (0.5,0.5) circle (\rad);
\filldraw (1.5,0.5) circle (\rad);
\filldraw (0.5,1.5) circle (\rad);
\filldraw (1.5,1.5) circle (\rad);
\filldraw (2.5,0.5) circle (\rad);
\filldraw (2.5,1.5) circle (\rad);

\draw[thick] (0.5,0.5)--(2.5,0.5);
\draw[thick] (0.5,1.5)--(2.5,1.5);
\draw[dashed] (0.5,0.5)--(0.5,1.5);
\draw[dashed] (1.5,0.5)--(1.5,1.5);
\draw[dashed] (2.5,0.5)--(2.5,1.5);
\end{tikzpicture}
\caption*{$\PP_1$}
\end{subfigure}%
\begin{subfigure}{0.333\textwidth}
\centering
\begin{tikzpicture}
\draw (0,0)--(2,0);
\draw (0,1)--(3,1);
\draw (1,2)--(3,2);

\draw (0,0)--(0,1);
\draw (1,0)--(1,2);
\draw (2,0)--(2,2);
\draw (3,1)--(3,2);

\filldraw (0.5,0.5) circle (\rad);
\filldraw (1.5,0.5) circle (\rad);
\filldraw (2.5,1.5) circle (\rad);
\filldraw (1.5,1.5) circle (\rad);

\draw[thick] (0.5,0.5)--(1.5,0.5);
\draw[thick] (1.5,1.5)--(2.5,1.5);
\draw[dashed] (1.5,1.5)--(1.5,0.5);
\end{tikzpicture}
\caption*{$\PP_2$}
\end{subfigure}%
\begin{subfigure}{0.333\textwidth}
\centering
\begin{tikzpicture}
\draw (0,0)--(2,0);
\draw (0,1)--(2,1);
\draw (1,2)--(2,2);

\draw (0,0)--(0,1);
\draw (1,0)--(1,2);
\draw (2,0)--(2,2);

\filldraw (0.5,0.5) circle (\rad);
\filldraw (1.5,0.5) circle (\rad);
\filldraw (1.5,1.5) circle (\rad);

\draw[thick] (0.5,0.5)--(1.5,0.5);
\draw[dashed] (1.5,1.5)--(1.5,0.5);
\end{tikzpicture}
\caption*{$\PP_3$}
\end{subfigure}
\caption{We highlight with a thick line the horizontal cell intervals and with a dashed line the vertical cell intervals}\label{fig:part}
\end{figure}
\end{Example}
\begin{Definition}
A cell interval $I={C_1, C_2 \cdots C_m} \in \CC$ is called \emph{embedded} if there exists $F=\{D_1,\ldots , D_m \} \in \RR$ such that for any $i \in \{1,\ldots, m\}$ the set $\{C_i, D_i\}$ is attacking.
\end{Definition}
\begin{Remark}\label{rmk:notemb}
Let $I$ be a non-embedded interval. Then any facet $F \in \RR_\PP$ is such that $F \cap I \neq \varnothing$.
\end{Remark}
 \begin{Definition}
Let $\AA$ be a partition of $\PP$. If no interval of $\AA$ is embedded then $\AA$ is called \emph{super partition}. 
 \end{Definition}
 \begin{Example}
 We consider the polyominoes in Figure \ref{fig:part}. The partition of the polyomino $\PP_2$ and the horizontal partition of $\PP_1$ are super partitions, while the vertical one is not a super partition because any vertical cell interval is embedded. In particular referring to Figure  \ref{fig:embed}, $\{E,C\}$ embeds the interval $AD$, $\{D,C\}$ embeds the interval $BE$ and $\{D,B\}$ embeds the interval $CF$.
 
\begin{figure}[H]
\centering
\begin{tikzpicture}
\draw (0,0)--(3,0);
\draw (0,1)--(3,1);
\draw (0,2)--(3,2);

\draw (0,0)--(0,2);
\draw (1,0)--(1,2);
\draw (2,0)--(2,2);
\draw (3,0)--(3,2);
\node at (0.5,0.5) {$A$};
\node at (1.5,0.5) {$B$};
\node at (0.5,1.5)  {$D$};
\node at (1.5,1.5)  {$E$};
\node at (2.5,0.5)  {$C$};
\node at (2.5,1.5)  {$F$};
\end{tikzpicture}
\caption{All of the vertical intervals of $\PP_1$ are embedded }\label{fig:embed}
\end{figure}
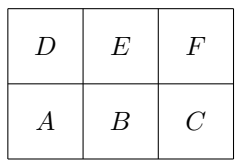
 \end{Example}
 
 We have seen that polyominoes could have at most two partitions, moreover we have the following.
 \begin{Proposition}
 A polyomino $\PP$ has two super partitions if and only if $\PP$ is a square.
 \end{Proposition}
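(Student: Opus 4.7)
The proof splits into the two implications.

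$(\Leftarrow)$ Suppose $\PP$ is an $n \times n$ square, so both the horizontal partition $H$ (the $n$ rows, each of length $n$) and the vertical partition $V$ (the $n$ columns, each of length $n$) exist. I would show $H$ is super; $V$ is analogous. Take any row $I = \{C_1, \ldots, C_n\} \in H$ at height $r$, and suppose toward contradiction that $I$ is embedded by $F = \{D_1, \ldots, D_n\} \in \RR$ with each $\{C_i, D_i\}$ attacking. Since $F$ consists of $n$ pairwise non-attacking rooks in an $n \times n$ grid, it is a permutation matrix. The unique rook of $F$ in row $r$, say $D_{i_0}$, must lie inside $I$, so $D_{i_0} = C_j$ for some $j$. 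The matching then pairs $C_j$ with some $D \in F \setminus \{D_{i_0}\}$, but $D$ is in a row other than $r$ and in a column other than that of $C_j$ (as $F$ is a permutation matrix), so $D$ does not attack $C_j$. This contradiction shows $I$ is not embedded.

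$(\Rightarrow)$ Suppose $\PP$ has two super partitions; then both $H$ and $V$ exist and are super. Let $p, q$ denote the numbers of non-empty rows and columns of $\PP$. By Remark~\ref{rmk:notemb}, every facet $F$ of $\RR$ meets every interval of $H$ and every interval of $V$, so $|F| \geq |H|$ and $|F| \geq |V|$. The non-attacking condition gives $|F| \leq p, q$, and the trivial bounds $|H| \geq p$, $|V| \geq q$ (at least one maximal horizontal/vertical interval per non-empty row/column) combine to produce
\[
p \leq |H| \leq r(\PP) \leq \min(p, q) \leq q
\]
and symmetrically $q \leq p$. Hence $p = q$, $|H| = |V| = p$, and $r(\PP) = p$, so each row and each column of $\PP$ is a single contiguous block of length $\geq 2$; that is, $\PP$ is row- and column-convex with bounding box $p \times p$.

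It remains to show $\PP$ equals its bounding box. Assume otherwise; fix $(x^*, y^*) \in [0, p-1]^2 \setminus \PP$ and let $I \in H$ be the horizontal interval in row $y^*$, of length $\ell \leq p - 1$. For each cell $C_i = (c_i, y^*) \in I$, the column $c_i$ is a contiguous block of length $\geq 2$ containing $y^*$, so $Y_{c_i} := \{y \neq y^* : (c_i, y) \in \PP\}$ is non-empty. A Hall-type argument, using row- and column-convexity of $\PP$ together with the location of the missing cell, then produces distinct $y_i \in Y_{c_i}$ such that $F = \{(c_i, y_i)\}_{i=1}^{\ell}$ is a non-attacking rook set embedding $I$ via column attacks, contradicting super-ness of $H$. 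Hence $\PP$ fills its $p \times p$ bounding box and is a square. The main obstacle is this last Hall step: verifying $\bigl|\bigcup_{i \in S} Y_{c_i}\bigr| \geq |S|$ for every $S \subseteq \{1, \ldots, \ell\}$ requires a careful examination of how the missing cell $(x^*, y^*)$ sits inside the convex shape of $\PP$.
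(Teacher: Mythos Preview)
Your $(\Leftarrow)$ direction is fine and matches the paper's idea. One small point: you assert ``the matching then pairs $C_j$ with some $D\in F\setminus\{D_{i_0}\}$'' without ruling out $j=i_0$; but if $j=i_0$ then $D_{i_0}=C_{i_0}$ and $\{C_{i_0},D_{i_0}\}$ is a singleton, which is not an attacking pair, so this case is excluded. With that remark the argument is complete.

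For $(\Rightarrow)$, your counting argument showing $|H|=|V|=p=q=r(\PP)$ and hence that $\PP$ is row- and column-convex inside a $p\times p$ box is correct and rather clean (the paper does not isolate this step). However, the final step is a genuine gap, not just a detail to fill in. You yourself flag the Hall condition
\[
\Bigl|\bigcup_{i\in S} Y_{c_i}\Bigr|\ \geq\ |S|
\]
as unverified, and indeed for an \emph{arbitrary} row $y^*$ containing a missing cell this inequality does not follow from convexity alone in any routine way: the sets $Y_{c_i}=[a_i,b_i]\setminus\{y^*\}$ all contain $y^*$ in their closure, so $\bigl|\bigcup_{i\in S}Y_{c_i}\bigr|=\max_{S}b_i-\min_{S}a_i$, and bounding this below by $|S|$ requires controlling how the column-intervals through row $y^*$ spread out vertically. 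Nothing in your setup forces that spread. At minimum you would need to choose $y^*$ more carefully (for instance an extreme row, so that all the $Y_{c_i}$ lie on one side of $y^*$) and then use connectivity of $\PP$ together with the existence of rows outside the range $[\min a_i,\max b_i]$ to force the inequality; this is real additional work that your write-up does not supply.

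The paper avoids Hall entirely. It fixes the top row $I_1$ (so every column through $I_1$ extends only downward), sets $n=|I_1|$ and $m=\min_i|J_i|$ over the columns $J_i$ meeting $I_1$, and then produces explicit embeddings: if $m<n$ one embeds the shortest column $J_t$ by rooks $D_k=I_k\cap J_{i_k}$ chosen on a diagonal avoiding column $t$; if $m>n$ one embeds $I_1$ by the sub-diagonal $D_k=I_{k+1}\cap J_k$. Hence $m=n$, and a similar argument forces every column and every row to have length exactly $n$. This is more hands-on than your approach but is self-contained, whereas your route still needs the matching lemma to be proved.
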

 \begin{proof}
 If $\PP$ is the $n\times n$ square, then it has two partitions given by the $n$ rows and the $n$ columns. Moreover, any interval can be attacked at most on $n-1$ cells, that is, the two partitions have no embedded intervals. 
 
 Conversely, let $\AA$ and $\BB$ be two super partitions of $\PP$. Let $I_1=C_1C_2\cdots C_n \in \AA$ with a free edge interval, wlog the uppermost one. Let $J_1,\ldots, J_n \in \BB$ be such that $I_1 \cap J_i=\{C_i\}$. Let $m= \min_{i}\{|J_i|\}$, $t \in \{1,\ldots, n\}$ be such that $|J_t|=m$ and let $I_1, \ldots I_m \in \AA$ be the rows covering the intervals $J_1,\ldots J_n$. We claim $m=n$. \\
 If $m<n$, for any subset $\{i_1,\ldots,i_m\}\subset {1,\ldots, \hat{t}, \ldots,n}$ it holds that $F= \{D_1,\ldots, D_m\} \in \RR$, where $D_k=I_{k}\cap J_{i_k}$. That is $J_t$ is embedded and $\BB$ is not a super partition of $\PP$.\\
 If $m>n$,  for any $k \in \{1,\ldots n\}$ let $D_{k}=I_{k+1} \cap J_{k}$, then $F=\{D_1,\ldots, D_n\}$ attacks all the cells of $I_1$.That is $I_1$ is embedded and $\AA$ is not a super partition of $\PP$.
 That is $m=l$.\\
 Moreover, if there exists a $k$ such that  $J_{k}=D_1 D_2 \cdots D_{s}$ with $s > m$, then let $F$ be a configuration on the $n\times nn$ square containing $C_k$. The configuration $F'= F \setminus \{C_k\} \cup D_{m+1}$ is non-attacking and covers $I_1$, that is $\AA$ is not a super partition of $\PP$. 
 The latter shows that $I_1,\ldots, I_n$ are the only intervals of the partition $\AA$. By a similar argument, $J_1, J_2, \ldots J_n$ are the only intervals of $\BB$, that is $\PP$ is the $n \times n$ square.
 \end{proof}
 
 \begin{Proposition}
 Let $\PP$ be a non-square polyomino with unique super partition $\AA$. Then for any $J \in \CC\setminus \AA$, $J$ is embedded.
 \end{Proposition}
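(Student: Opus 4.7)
The plan is to argue by contradiction: assuming some $J\in\CC\setminus\AA$ fails to be embedded, I will construct a second super partition $\BB\neq\AA$, which together with $\AA$ would contradict uniqueness. Assume without loss of generality that $\AA$ is horizontal, and write $J=D_1D_2\cdots D_k$ as a vertical cell interval in some column $c$ of $\PP$, with $D_i$ lying in the row $R_i$ of $\PP$ and in the horizontal interval $I_{\tau(i)}\in\AA$. A preliminary observation is that no interval of $\AA$ is a singleton: since $\PP$ is connected with more than one cell, any lone cell has a neighbour in $G_\PP$ that embeds it, contradicting $\AA$ super.

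The first main step is to reduce the embeddability of $J$ to a Hall-type matching condition. The interval $J$ is embedded if and only if one can pick, for each $i$, a cell $E_i\in R_i\setminus\{D_i\}$ so that the $E_i$'s lie in pairwise distinct columns: such $E_i$'s automatically form a non-attacking configuration attacking $J$ via common rows, and a short case analysis shows that no other shape of embedding is possible (at most one attacker may lie in column $c$, and placing one there would push two attackers into the same row). Hall's marriage theorem then reformulates this as: for every $S\subseteq\{1,\dots,k\}$, the rows $\{R_i:i\in S\}$ collectively meet at least $|S|$ columns distinct from $c$. Assume for contradiction Hall fails, witnessed by some $S$ with column-set $T\subseteq\{\text{columns}\neq c\}$ of size $|T|\leq|S|-1$. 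Since $\AA$ is super, Remark~\ref{rmk:notemb} (applied to every $I\in\AA$) forces every facet of $\RR$ to intersect every interval of $\AA$, and combined with the bound that each row can contain at most one rook of a non-attacking configuration this gives $r(\PP)=|\AA|$ and every maximum rook configuration placing exactly one rook per interval of $\AA$. Restricted to the rows indexed by $S$ this yields $|S|$ non-attacking rooks in distinct columns inside $T\cup\{c\}$, whence $|T|=|S|-1$. Imitating the argument in the proof of the preceding Proposition---selecting an uppermost row with a free edge interval, comparing minimal column heights with maximal row lengths, and running the ``$m=n$'' dichotomy inside the sub-polyomino---one concludes that the sub-polyomino on the rows of $S$ and the columns of $T\cup\{c\}$ is a full $|S|\times|S|$ square, whose horizontal and vertical cell partitions are both super inside it.

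Finally, define $\BB$ by removing from $\AA$ the $|S|$ horizontal intervals contained in this subsquare and adjoining the $|S|$ vertical columns of the same subsquare, leaving every other interval of $\AA$ untouched. Since the removed and adjoined intervals cover the same cells (namely those of the subsquare), $\BB$ is a partition of $\PP$. The inherited horizontal intervals are not embedded because $\AA$ was super; the new vertical intervals are not embedded inside the subsquare by the preceding Proposition; and any embedding of them using cells of $\PP$ outside the subsquare is blocked because any attacker of a cell $D'$ of the subsquare must lie either in the row of $D'$ (a row indexed by $S$, hence wholly inside $T\cup\{c\}$) or in the column of $D'$ (already one of the subsquare's columns). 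Thus $\BB$ is a super partition distinct from $\AA$, contradicting uniqueness. The main obstacle is precisely this last verification, ruling out embeddings that use attackers outside the subsquare, which crucially uses the rigidity established in the obstruction analysis; a secondary technical point is making the ``$|S|\times|S|$ square'' conclusion fully rigorous by faithfully adapting the proof of the preceding Proposition to a sub-collection of rows of $\AA$ rather than all of $\AA$.
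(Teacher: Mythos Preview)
Your approach is genuinely different from the paper's. The paper argues by induction on $m=|J|$: it restricts to the sub-polyomino $\PP_J$ spanned by the intervals $I_1,\dots,I_m\in\AA$ meeting $J$, peels off a boundary row $I_1$ together with an extremal column $J_1$, and applies the inductive hypothesis to the remainder $\PP'$ (with a separate direct analysis when $\PP'$ turns out to be a square). Neither Hall's theorem nor the construction of a second super partition appears.

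Your plan, while inventive, has a structural gap that is more serious than the two ``obstacles'' you flag. The Hall-violating set $S\subseteq\{1,\dots,k\}$ need not consist of \emph{consecutive} row indices. When it does not, the cells of a fixed column $c'\in T\cup\{c\}$ lying in the rows indexed by $S$ do not even form a cell interval of $\PP$, let alone a maximal one; consequently the ``columns of the subsquare'' you wish to adjoin to $\BB$ are simply not elements of $\CC$, and $\BB$ fails to be a partition in the paper's sense. Forcing $S$ to be an interval is not automatic from Hall failure and would itself require an argument using the super-partition hypothesis. Relatedly, the ``$|S|\times|S|$ square'' step you call secondary is in fact the heart of the matter: the preceding Proposition's $m=n$ dichotomy uses that \emph{both} the row and the column families are super partitions of the sub-polyomino, whereas here you only know the rows indexed by $S$ are non-embedded in $\PP$, a weaker condition that does not transfer to the sub-collection. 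Finally, your Hall reduction assumes every attacker of $D_i$ must lie in $R_i$; but an attacker may sit in column $c$ outside $J$ (possible when $\PP$ is not column-convex), in a row disjoint from $R_1,\dots,R_k$, so your ``two attackers in the same row'' dismissal does not apply.
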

 \begin{proof}
 Given $J \in \CC\setminus \AA$ with $J=B_1 B_2 \cdots B_m$, one can focus on the polyomino $\PP=\PP_J$ given by $I_1,\ldots, I_m \in \AA$ such that for any $k \in 1,\ldots ,m$ $I_k \cap J= B_k$.
 We proceed by induction on $m$.
 Let $m=2$ and assume that $J=B_1 B_2$ is not embedded. That is, if $D_1 \in I_1$ attacks $B_1$ and $D_2 \in I_2$ attacks $B_2$, $D_1$ and $D_2$ are attacking, hence there are no other cells in $\PP$ except $B_1,B_2,D_1,D_2$ and $\PP$ is the $2 \times 2$ square. Contradiction. 
Let $m > 2$. We assume that for any polyomino $\PP'$ with super partition $\BB$, any interval $J'$ in $\CC_\PP' \setminus \BB$ with $|J'|<m$ is embedded. 
We consider $I_1$. By the definition of polyomino $\PP_J$, $I_1$ has a free edge interval, wlog assume the super partition is made by horizontal intervals and the free edge is the uppermost one, and it contains the cell $B_1$ of $J$.
Assume $I_1=C_1 C_2 \ldots C_n$ and $C_t=B_1$ for $1< t \leq n$ (if $B_1=C_1$, apply the following arguments to the cell $C_n$). Let $J_1=D_1 D_2 \ldots D_l$ with $D_1=C_1$ and by construction $l \leq m$. We consider the polyomino $\PP'$ given by $\PP \setminus (I_1 \cup J_1)$. We divide two cases: 
\begin{enumerate}
    \item $\PP'$ is a square;
    \item $\PP'$ is not a square.
\end{enumerate}

In case (1), since $\PP'$ contains $J\setminus B_1$, then $\PP'$ is a $(m-1) \times (m-1)$ square and assume its cells are $\{A_{ij}\}_{i,j \in\{2,\ldots,m\}}$.
We prove that the length $l$ of $J_1$ is equal to $m$ and that $C_2$ and $A_{22}$ are on the same column, by proving that $I_m=D_m A_{m 2} A_{m 3}\cdots A_{mm}$.
By contraposition, assume $I_m= A_{m 2} A_{m 3} \cdots A_{mm}$ and a set $F\in \RR_{\PP'}$ with $|F|=m-1$ containing the cell $B_m \in I_m$. Then $F'=(F \setminus \{B_m\}) \cup \{B_1\} \in \RR_\PP$ since $B_1 \in I_1$. We have that $F'$ attacks the interval $I_m \in \AA$, that is $\AA$ is not a super partition. This leads to a contradiction. In particular from $ A_{m 2} A_{m 3}\cdots A_{mm} \subset I_m$ and $I_m \setminus J_1=A_{m 2} A_{m 3}\cdots A_{mm}$, we obtain $I_m=D_m A_{m 2} A_{m 3}\cdots A_{mm}$.
Moreover, by similar arguments, if the length $n$ of $I_1$ is less than $m$, one can find an embedding for $I_1$, given by the set $\{D_2, A_{32},\ldots ,A_{n+1 n}\}$. That is $n \geq m$. If $n=m$, then $\PP$ is the $m\times m$ square, that contradicts the hypothesis, hence $n > m$. That is, the $m \times m$ square is a subpolyomino of $\PP$. Therefore, if $F \in \RR_\PP$ is a set of $m$ non-attacking rooks on such a square containing $B_1$, that is $F\setminus {B_1}$ embeds $J \setminus {B_1}$, then $F'= (F\setminus \{B_1\} \cup \{C_n\})$ embeds the interval $J$.

In case (2), the set  $\AA'=\{ I_2 \setminus \{D_2\},  \ldots , I_l \setminus \{D_l\}, I_{l+1}, \ldots I_m\}$ is a partition of $\PP'$. We prove that $\AA'$ is a super partition. If one of the intervals $I_{l+1}, \ldots I_m$ is embedded, then it is embedded in $\AA$, contradiction. If an interval $I_k \setminus D_k$ is embedded by a configuration $F' \in \RR_{PP'}$, then $D_1 \cup F'$ embeds $I_k$ in $\PP$, and this is a contradiction. That is $\AA'$ is a super partition of $\PP'$.
Moreover, the interval $J \ \{B_1\}$ has cardinality $m-1$ and by induction hypothesis is embedded by a configuration $F' \in \RR_{\PP'}$, and $F' \cup \{D_1\}$
is an embedding for $J$ as desired.
\end{proof}
\begin{Lemma}\label{lem:embneq}
 Let $\PP$ be a polyomino and let $I,I' \in \CC$ be such that $I$ is embedded and there exists $J \in \CC$ with $J\cap I \neq \varnothing$ and $J\cap I \neq \varnothing$. Then there exists an embedding $F$ of $I$ such that $F \cap I' \neq \varnothing$.
\end{Lemma}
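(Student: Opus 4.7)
The plan is to modify the given embedding $F = \{D_1, \dots, D_m\}$ of $I = \{C_1, \dots, C_m\}$ by a single swap so that the resulting embedding meets $I'$. If $F \cap I' \neq \varnothing$ already, there is nothing to prove; otherwise I assume $F \cap I' = \varnothing$ throughout. I will take $I$ horizontal without loss of generality. Because the maximal horizontal interval through any cell of $I$ is $I$ itself, $J$ is either equal to $I$ or is vertical and meets $I$ in a single cell. I plan to carry out the argument in detail in the generic case where $J$ is vertical with $I \cap J = \{C_i\}$ and $I'$ is horizontal with $J \cap I' = \{C'\}$; the remaining configurations, namely $J = I$ (which forces $I \cap I' \ne \varnothing$) and $I' = J$ (so $I'$ is vertical), will be handled by the same swap idea with a suitable choice of $C'$.

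The candidate swap will be
\[
F' \; = \; (F \setminus \{D_i\}) \cup \{C'\}.
\]
Since $C'$ and $C_i$ both lie in the column $J$, the pair is attacking, so $C'$ can play the role of attacker of $C_i$ while the attacking relations $D_k \leftrightarrow C_k$ for $k \neq i$ are preserved. Thus, once I verify that $F'$ is a face of $\RR$ of cardinality $m$, it is the required embedding and $C' \in F' \cap I'$. The verification reduces to showing that no $D_k$ with $k \ne i$ lies on the row $I'$ or on the column $J$. The first is immediate from $F \cap I' = \varnothing$, and this same assumption also forces $C' \ne D_k$ for every $k$, so that $|F'| = m$.

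The core technical step, and the main obstacle I anticipate, will be ruling out $D_k \in J$ for $k \ne i$. If $D_k = C_i$, then $D_i$ attacks $D_k$, contradicting the non-attacking property of $F$. If instead $D_k \in J \setminus \{C_i\}$, then $D_k$ lies in a row different from $I$ and hence from the row of $C_k$, while its column is $J$, which is not the column of $C_k$ because $C_k$ and $C_i$ are two distinct cells of the horizontal interval $I$. Consequently $D_k$ shares neither a row nor a column with $C_k$, contradicting that $\{C_k, D_k\}$ is attacking. The parallel sub-cases will be handled by the same kind of swap, choosing $C' \in I' \setminus \{C_i\}$ lying in a row (respectively column) disjoint from the cells of $F \setminus \{D_i\}$; such a $C'$ can be located by counting the rows (columns) occupied by the $m-1$ cells of $F \setminus \{D_i\}$ against those available inside $I'$.
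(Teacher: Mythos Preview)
Your main-case argument is correct and uses the same single-swap idea as the paper's proof; you are in fact more careful than the paper, which simply asserts ``if $D$ is attacked by some $D_k$ then $D_k\in I'$'' and does not explicitly address the possibility $D_k\in J$ that you rule out.

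Your counting sketch for the side cases, however, is not justified as written: locating $C'\in I'\setminus\{C_i\}$ that avoids the rows of the $m-1$ cells of $F\setminus\{D_i\}$ would require $|I'|\geq m+1$, which is nowhere assumed. Fortunately no swap is needed in those cases. First note that $F\cap I=\varnothing$ for \emph{every} embedding $F$ of $I$: if $D_k\in I$ then $D_k=C_{k'}$ for some $k'\neq k$, and $D_{k'}$ attacks $C_{k'}=D_k$, contradicting that $F$ is non-attacking. Now when $I'$ is the vertical interval through some $C_i\in I$ (this covers both your sub-case $I'=J$ and, after replacing $J$ by $I'$, your sub-case $J=I$), the cell $D_i$ attacks $C_i$ and hence lies in $I\cup I'$; since $D_i\notin I$ we get $D_i\in I'$, so $F\cap I'\neq\varnothing$ before any modification. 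Thus the generic case is the only one requiring work, and there your proof is complete.
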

\begin{proof}
Let $G=\{D_1,D_2, \ldots D_l\}$ be an embedding of $I=C_1C_2 \cdots C_l$ and assume $G \cap I'= \varnothing$. Assume that $J\cap I=C_j$ and $J\cap I'=D$. Then we claim $F=G\setminus \{D_j\} \cup \{D\}\in \RR_\PP$. If $D$ is attacked by a $D_k \in G$, then $D_k \in I'$, that is $G \cap I'\neq  \varnothing$.
Hence, $F$ is an embedding for $I$ with $F\cap I'\neq \varnothing$ as desired.
\end{proof}
 We now prove the main theorem of this section
 \begin{Theorem}\label{thm:main}
 Let $\PP$ be a polyomino. The following are equivalent:
 \begin{itemize}
     \item[(1)] $\RR_\PP$ is pure and has dimension $d-1$;
     \item[(2)] $\PP$ admits a super partition with $|\AA|=d$.
 \end{itemize}
 \end{Theorem}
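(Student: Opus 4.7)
I would prove the two implications separately, handling the easy direction first.

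For $(2) \Rightarrow (1)$, assume $\AA$ is a super partition of $\PP$ with $|\AA| = d$. Two cells in a common interval $I \in \AA$ share a row or column and hence attack each other, so every face $F$ of $\RR_\PP$ contains at most one cell per interval, giving $|F| \le d$. Conversely, since no interval of $\AA$ is embedded, Remark~\ref{rmk:notemb} forces every facet $F$ to meet every interval; disjointness of the intervals of $\AA$ then yields $|F| \ge |\AA| = d$. Combining, every facet has cardinality exactly $d$, so $\RR_\PP$ is pure of dimension $d-1$.

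For $(1) \Rightarrow (2)$, assume $\RR_\PP$ is pure of dimension $d-1$. My plan is to find a super partition of size $d$ among the two natural candidates: the partition $\AA_H$ of $\PP$ into maximal horizontal cell intervals and its vertical analogue $\AA_V$. Cells sharing a horizontal or vertical interval attack, so each facet contributes at most one rook per interval, giving $|\AA_H|, |\AA_V| \ge d$.

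The key step is that if $\AA_H$ is a valid partition with exactly $d$ intervals, then it is super. Indeed, if some $I \in \AA_H$ were embedded by $F = \{D_1,\dots,D_m\}$, then a coincidence $D_i = C_k \in I$ would place both $D_i$ and $D_k$ in $F$ with $D_k$ attacking $C_k = D_i$, contradicting $F \in \RR_\PP$; hence $F \cap I = \varnothing$. Extending $F$ to a facet $F^*$, each cell $C_i \in I$ is already attacked by $D_i \in F \subseteq F^*$, so $F^* \cap I = \varnothing$. But $|F^*| = d = |\AA_H|$ with at most one rook per interval forces $F^*$ to meet every interval of $\AA_H$, contradicting $F^* \cap I = \varnothing$. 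The symmetric statement holds for $\AA_V$.

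It remains to show that at least one of $\AA_H$ or $\AA_V$ is a valid partition of size exactly $d$, which is the main obstacle. If $|\AA_H| > d$, I would pick a facet $F^*$ missing some $I \in \AA_H$ and use maximality of $I$ to see that no rook of $F^*$ lies in the row of $I$, so each cell of $I$ is attacked through its column by a distinct rook of $F^*$, producing an explicit embedding of $I$ drawn from $F^*$. Invoking Lemma~\ref{lem:embneq} to transfer such embeddings between intervals of different orientation, I would argue that this horizontal defect forces $\AA_V$ to be a valid partition of exactly $d$ intervals, hence super by the previous step. The case where $\AA_H$ contains a singleton interval is treated symmetrically, using that a cell without horizontal neighbours must sit in a non-trivial vertical interval. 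Combining the cases, at least one of $\AA_H, \AA_V$ is a valid super partition of size $d$, completing the proof.
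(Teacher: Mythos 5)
Your direction $(2)\Rightarrow(1)$ is correct and matches the paper's argument in substance: at most one rook per interval of $\AA$ gives $|F|\le d$ for every face, and Remark \ref{rmk:notemb} together with disjointness gives $|F|\ge d$ for every facet. Your key reduction --- that a valid partition with exactly $d$ intervals is automatically super --- is also correct and is exactly the right lemma to aim for. The problem is that the whole difficulty of $(1)\Rightarrow(2)$ sits in the step you defer with ``I would argue'': showing that at least one of $\AA_H$, $\AA_V$ is a valid partition of size exactly $d$. That claim is asserted, not proved, and two sub-cases are genuinely missing. First, both $\AA_H$ and $\AA_V$ may contain singleton intervals (as for the L-tromino $\PP_3$ of Figure \ref{fig:part}), in which case neither is a valid partition and ``treated symmetrically'' yields nothing; the paper excludes this by taking a path of cells between a horizontal singleton $C$ and a vertical singleton $D$ and placing rooks alternately on the even-indexed and on the odd-indexed changes of direction, producing two maximal faces of different cardinalities and contradicting pureness. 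Nothing in your sketch replaces that construction.

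Second, and more seriously, the implication ``$|\AA_H|>d$ forces $\AA_V$ to be a valid partition with no embedded interval'' is the hard core of the theorem. The paper first proves that the set of embedded intervals of a partition is either empty or the whole partition, and then, when all horizontal intervals are embedded, uses Lemma \ref{lem:embneq} to manufacture, from an embedded vertical interval $J$ and an embedded horizontal interval $I$, a maximal face $G'$ and a strictly larger maximal face $G'\setminus\{A\}\cup\{C,D\}$, contradicting purity. Invoking Lemma \ref{lem:embneq} to ``transfer embeddings between orientations'' names the tool but supplies none of this argument. (A smaller imprecision: a rook of $F^*$ can lie in the same row as $I$ yet in a different maximal horizontal interval, so the correct statement is that no rook of $F^*$ attacks $I$ horizontally, whence each cell of $I$ is attacked through its column; your conclusion that $I$ is embedded still stands.) As written, the proposal establishes the easy implication and the correct reduction, but the main implication has a genuine gap.
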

 \begin{proof}
 $(2) \Rightarrow (1)$. Assume $\PP$ has a super partition $\AA$ with $|\AA|=d$. 
 By contraposition, assume that $\RR_\PP$ is not pure, that is there exists a facet $F$ with $|F|=t < d$. Let $I_1,\ldots I_t \in \AA$ be the intervals containing the $t$ cells of $F$. Since $t < d $, then $\AA \setminus \{ I_1,\ldots I_t \} \neq \varnothing$, that is, there exists $I \in \AA \setminus \{ I_1,\ldots I_t \}$ that is embedded by $F$. Hence, $\AA$ is not a super partition. Contradiction.
 
  $(1) \Rightarrow (2)$. Assume that $\RR_\PP$ is pure and has dimension $d-1$. We divide the proof in the following steps:
 \begin{itemize}
     \item We prove that $\PP$ admits a partition $\AA$;
     \item If $\EE \subseteq \AA$ is the set of the embedded intervals of $\AA$, then either $\EE = \varnothing$ or $\EE= \AA$.
     \item In the case $\AA=\EE$, $\PP$ admits a super partition $\BB$.
\end{itemize}

  \textit{Existence of $\AA$.} By contraposition, assume that $\PP$ admits no partition, that is there exists a vertical interval $I$ with a single cell $C$ and a horizontal interval with a single cell $D$. 
  Since $\PP$ is a polyomino, the cells $C$ and $D$ are $k$-connected with $k$ odd and let $C_1, C_2, \ldots, C_k$ be the changes of direction. We consider the set $F=\{C=C_0, C_2, C_4 \ldots, C_{k-1},D=C_{k+1}\}$ that lies in $\RR_\PP$, because for any $i \in \{2,4,\ldots, k-1\}$ $C_i$ only attacks $C_{i-1}$ and $C_{i+1}$. Since $\RR_\PP$ is pure, then there exists $G \in \RR_\PP$ such that $|F \cup G|=d$. We now consider $F'=\{C_1, C_3, \ldots, C_k\} \in \RR_\PP$ and we claim that $F' \cup  G \in \RR_\PP$. If $A \in G$ attacks $C_i$ with $i=1,3, \ldots k$, then $A$ attacks either $C_{i-1} \in F$ or $C_{i+1} \in F$, that is $F\cup G \notin \RR_\PP$. Contradiction. \\
  We prove that $F' \cup G$ is maximal, i.e. any cell $A$ of $\PP$ is attacked by a cell of $F' \cup G$. 
  Since $F \cup G$ is maximal, then for any $A \in \PP$ there exists $B \in F \cup G$ attacking $A$. If $B \in G$, then $B \in F' \cup G$.  If $B \in F$, then $B= C_{i}$ for $i \in {0,2,\ldots, k+1}$ and either $C_{i-1}$ or $C_{i+1}$ in $F'$ attacks $B$, and hence $A$. We showed that $F' \cup G$ with $|F' \cup G|< |F \cup G|$ is maximal, hence $\RR_\PP$ is not pure. Contradiction. Hence, $\PP$ admits a partition $\AA$ and since $\dim \RR_\PP=d-1$, then $|\AA|\geq d$. 
  
  \textit{The set $ \EE $ of embedded intervals.} Let $\varnothing \subseteq \EE \subseteq \AA$ be the set of embedded intervals of $\AA$. We claim that either $\EE = \varnothing$ or $\EE= \AA$. If $\varnothing \neq \EE \neq \AA$, let $I \in \AA\setminus \EE$ be a non-embedded interval and let $I' \in \EE$, and since $\AA$ is a partition, there exists $C \in I$ and $C' \in I'$ such that $C$ and $C'$ are $2k$-connected with $k \geq 1$ with intervals $I_1=I \ldots I_k=I' \in \AA$. Wlog one can assume that $k=1$, in fact if $I_2$ is embedded then take $I'=I_2$, otherwise take $I=I_2$. That is, assume $I$ and $I'$ are $2$-connected, that is, there exists $J\in \CC$ such that $J\cap I\neq \varnothing$ and $J\cap I'\neq \varnothing$. From Lemma \ref{lem:embneq}, there exists an embedding $F$ of $I'$ such that $F \cap I \neq \varnothing$. Let $G$ be a facet of $\RR_\PP$ containing $F$ and let $G'= G \setminus \{A\} \cup C_j$. We observe that $G' \cap I = \varnothing$ that is either $I$ is embedded or by Remark \ref{rmk:notemb}, there exists a facet $\bar G$ with $\bar G \cap I \neq \varnothing$ containing $G'$, that are both contradictions. 
  Hence either $\EE = \varnothing$ or $\EE= \AA$. In the case $\AA=\varnothing$, we have that $\AA$ is a super partition and $|\AA|=d$ due to Remark \ref{rmk:notemb}. 
  
  \textit{The case $\EE=\AA$}. Any interval of $\AA$ is embedded, that is $|\AA| > d$ and, in particular, no interval of $\AA$ has single cells, therefore $\PP$ admits another partition $\BB$. Assume that $\AA$ contains rows and $\BB$ contains columns. We claim that $\BB$ contains no embedded intervals. By contraposition, assume that $J=C_1 C_2 \cdots C_l \in \BB$ is embedded by $F=\{D_1,\ldots, D_l\}$ and let $I_1,\ldots I_l$ be its rows. Since $l\leq d$ and $|\AA|>d$, then there exists $I \in \AA$ that is embedded by a facet $G $ containing $ F$. That is, a cell $C_j$ of $J$ is $2k$-connected to a cell $D$ of $I$ with $k\geq 1$ by a path with columns given by $J_1,J_2 \ldots J_{k}$ and changes of directions $L_1,L_2\ldots L_{2k}$. We may assume that $k=1$. In fact, if one of the rows of $J_1$ is embedded, then $C_j$ is $2$-connected to a cell $D'$ of an embedded interval. Otherwise, from Lemma \ref{lem:embneq}, one can choose $F$ such that $J_1 \cap F = \{D_k\}$. Since $G$ is maximal and $J_1$ has no embedded rows, any cell of $J_1 \setminus \{D_k\}$ is attacked by a cell of $G$, that is $G \setminus \{D_k\} \cup \{C_k\}$ is an embedding of $J_1$ and the cell $L_2 \in J_1$ is $2(k-1)$-connected to the cell $D$ of $I$. That is, we assume $C_j \in J$ and $D \in I$ are $2$-connected and we prove that there exists $\bar F$ embedding of $I$ such that $F \cap \bar{F}\neq \varnothing$. For this aim, let $F'$ be an embedding for $I$ and assume $F' \cap F = \varnothing$. Let $J'$ be such that $J'\cap I_j \neq \varnothing$ and $J' \cap I \neq \varnothing$. From Lemma \ref{lem:embneq} applied to $J,J'$ and $I_j$,we can choose $F$ such that $F \cap J' =\{D_j\}$. Moreover, since $I$ is embedded, then $J' \cap F'=\{D'\}$. Take $\bar F= F'\setminus \{D'\} \cup \{D_j\}$, that is $F \cap \bar{F}\neq \varnothing$.
  Let $G'$ be a maximal face containing $\bar F \cup F$, then a cell $A \in F \cap \bar{F}$ attacks a cell $C \in J$ and $D \in I$, that is the face $G' \setminus \{A\} \cup \{C,D\}$ is maximal and $|G' \setminus \{A\} \cup \{C,D\}|>G'$, contradiction to the pureness of $\RR_\PP$. This shows that $\BB$ contains no embedded intervals, hence $|\BB|= d$ and it is a super partition of $\PP$ and by Remark \ref{rmk:notemb} one has  $|\BB|=d$.
 \end{proof}

    We observe that a partition on $\PP$ induces a clique partition in the associated graph $G_\PP$. Moreover, if such partition is \emph{super} then the graph $G_\PP$ is said to be \emph{localizable} (see \cite{HMR}). For a graph $G$, if $\Delta(G)$ is pure, then we say that $G$ is \emph{well-covered}. Hence, Theorem \ref{thm:main} can be rephrased as follows:
    \begin{Theorem}
        Let $G_\PP$ be a graph associated to a polyomino $\PP$. Then the following are equivalent:
       \begin{enumerate}
           \item $G_\PP$ is well-covered;
           \item $G_\PP$ is localizable.
       \end{enumerate}
    \end{Theorem}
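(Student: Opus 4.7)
The plan is to derive the theorem directly from Theorem \ref{thm:main} by translating both graph-theoretic conditions to polyomino conditions, using the dictionary established in the paragraph preceding the statement. First, I would observe that by the very construction of $G_\PP$, its independence complex is $\RR_\PP$: a set of cells is independent in $G_\PP$ exactly when no two of them lie in a common row or column, i.e.\ when it is a non-attacking rook configuration. Consequently the facets of $\Delta(G_\PP)$ are the facets of $\RR_\PP$, and so $G_\PP$ is well-covered if and only if $\RR_\PP$ is pure. This identifies condition (1) of the theorem with condition (1) of Theorem \ref{thm:main}, once one reads $d-1$ there as the common dimension of the facets.

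Second, as already noted just above the statement, any partition $\AA$ of $\PP$ into cell intervals induces a clique partition of $G_\PP$ (each cell interval is a clique in $G_\PP$ because its cells pairwise share a row or a column), and conversely every partition of $V(G_\PP)$ into maximal cliques arises this way. Moreover, the super condition on $\AA$---no interval of $\AA$ is embedded---is precisely the defining property which makes the associated clique partition a localization of $G_\PP$ in the sense of \cite{HMR}, since embeddability of $I\in\AA$ is the assertion that some maximal independent set of $G_\PP$ hits every clique of $\AA$ including the one coming from $I$. Hence $G_\PP$ is localizable if and only if $\PP$ admits a super partition, matching condition (2) of Theorem \ref{thm:main}.

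Combining the two translations, the asserted equivalence (1) $\Leftrightarrow$ (2) becomes ``$\RR_\PP$ is pure if and only if $\PP$ admits a super partition,'' which is exactly the content of Theorem \ref{thm:main}. The only mildly delicate point, and the step I would handle most carefully, is verifying that the correspondence between polyomino partitions of $\PP$ and maximal-clique partitions of $G_\PP$ is indeed a bijection; for this I would invoke the observation that every triangle in $G_\PP$ must consist of three cells lying in a common row or common column, so that each maximal clique of $G_\PP$ is automatically contained in a single horizontal or vertical edge interval of $\PP$, and maximality then forces equality with a maximal edge interval.
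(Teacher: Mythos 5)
Your proposal is correct and follows essentially the same route as the paper, which states this theorem as a direct rephrasing of Theorem \ref{thm:main} via the dictionary well-covered $\leftrightarrow$ $\RR_\PP$ pure, clique partition of $G_\PP$ $\leftrightarrow$ partition of $\PP$ into cell intervals, and strong clique $\leftrightarrow$ non-embedded interval (your observation that every clique of $G_\PP$ lies in a single row or column is the right way to pin down the last correspondence). One sentence is stated backwards and should be fixed: embeddability of $I\in\AA$ is the assertion that \emph{some maximal independent set of $G_\PP$ is disjoint from $I$} (extend an embedding $F$ of $I$ to a facet, which must avoid $I$ since every cell of $I$ is already attacked; conversely a facet disjoint from $I$ attacks every cell of $I$ by maximality), i.e.\ that $I$ fails to be a strong clique --- not that some maximal independent set hits every clique of $\AA$; with that correction the identification of super partitions with partitions into strong cliques, hence with localizability, goes through exactly as you conclude.
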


 \section{Chordality of $\bar G_\PP$}\label{sec:chord}
 In this section we characterize the polyominoes $\PP$ for which the graph $\bar{G}$ is chordal. In view of Theorem \ref{fr}, we obtain information on the minimal free resolution of $I_\RR$. We start with the following Lemma.
 \begin{Lemma}
 Let $\PP$ be a polyomino and let $\gamma=\{A_1,\ldots, A_n\}$ be an induced cycle of $\bar G$. Then $n \in \{3,4,6\}$.
 \end{Lemma}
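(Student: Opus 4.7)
The plan is to encode the combinatorics of the induced cycle in terms of the row and column in which each cell lies, and then propagate a few forced equalities from the chord conditions until a contradiction arises whenever $n=5$ or $n\geq 7$. For each cell $A_i$ of the cycle, let $r_i$ denote the maximal horizontal cell interval containing $A_i$ (its row) and $c_i$ the maximal vertical cell interval containing $A_i$ (its column). By the definition of $G_\PP$, two cells of $\PP$ are adjacent in $G_\PP$ if and only if they share row or column; consequently, every cycle edge $\{A_i,A_{i+1}\}$ of $\bar G_\PP$ forces $r_i\neq r_{i+1}$ and $c_i\neq c_{i+1}$, while every chord $\{A_i,A_j\}$ forces $r_i=r_j$ or $c_i=c_j$.

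Starting from the chord $\{A_1,A_3\}$, by the symmetry between rows and columns I may assume $r_1=r_3$. The chord $\{A_1,A_4\}$ together with the cycle edge $\{A_3,A_4\}$ excludes $r_1=r_4$ and therefore forces $c_1=c_4$; similarly, the chord $\{A_2,A_4\}$ together with the cycle edge $\{A_1,A_2\}$ forces $r_2=r_4$. These three equalities $r_1=r_3$, $c_1=c_4$, $r_2=r_4$ are the starting data for both remaining cases. If $n=5$, I next invoke the chord $\{A_2,A_5\}$ (ruling out $r_2=r_5$ via the cycle edge $\{A_4,A_5\}$) to deduce $c_2=c_5$, and then the chord $\{A_3,A_5\}$ (ruling out $r_3=r_5$ via the cycle edge $\{A_5,A_1\}$) to deduce $c_3=c_5$; combining these gives $c_2=c_3$, which contradicts the cycle edge $\{A_2,A_3\}$.

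If instead $n\geq 7$, the key observation is that the pair $\{A_1,A_6\}$ is a genuine chord (it is a cycle edge precisely when $n=6$). So either $r_1=r_6$ or $c_1=c_6$. In the first sub-case, the chord $\{A_4,A_6\}$ forces $c_4=c_6$ (since $r_4=r_2\neq r_1=r_6$), yielding $c_6=c_1$; in the second sub-case, the chord $\{A_3,A_6\}$ forces $r_3=r_6$ (since $c_3\neq c_1=c_6$), yielding $r_6=r_1$. In both cases $A_6$ is forced to share both its row and its column with $A_1$, contradicting the distinctness of the cycle vertices. The main obstacle I foresee is pinpointing precisely the chord $\{A_1,A_6\}$ that separates the realizable case $n=6$ (visible, e.g., in the six cells of a $2\times 3$ rectangle) from the forbidden regime $n\geq 7$; once this chord is isolated, each sub-case reduces to a short chase through the chords incident to $A_6$.
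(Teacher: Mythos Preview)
Your argument is correct. The bookkeeping with the labels $r_i,c_i$ is sound: each cycle edge forces both coordinates to differ, each missing chord (an edge of $G$) forces one coordinate to agree, and your successive deductions in both cases are valid. In particular, for $n=5$ the chain $r_1=r_3$, $c_1=c_4$, $r_2=r_4$, $c_2=c_5$, $c_3=c_5$ is forced exactly as you claim, and for $n\ge 7$ the chord $\{A_1,A_6\}$ together with either $\{A_4,A_6\}$ or $\{A_3,A_6\}$ pins down both coordinates of $A_6$ to those of $A_1$. The one step you leave implicit, namely $c_1\ne c_3$ in the second sub-case, follows since $r_1=r_3$ and $A_1\ne A_3$; it would be worth saying this explicitly.

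Your route differs from the paper's in the nature of the contradiction obtained. For $n>6$ the paper argues geometrically that $A_1,A_3,A_5$ lie on a common cell interval and $A_1,A_4,A_6$ on a transversal one, and from this picture concludes that $\{A_3,A_6\}$ is an edge of $\bar G$, i.e.\ an illegal chord of the induced cycle; you instead push the constraints one step further and collapse $A_6$ onto $A_1$. For $n=5$ the paper invokes $1$-connectedness of $A_2$ and $A_3$ through $A_5$ inside the polyomino, whereas your argument stays purely combinatorial and never uses the geometry of $\PP$ beyond the row/column incidence. The upshot is that your proof is self-contained at the level of the graph $G_\PP$ and would go through verbatim for any graph whose vertices carry two ``coordinates'' with adjacency given by equality of one of them; the paper's version leans more on the picture but makes the role of the cross configuration and of the $2\times 3$ hexomino visually explicit.
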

 \begin{proof}
 We assume $n >6$. Since $\gamma$ is induced, then $A_3, A_4, A_5,A_6 \in N_{G}(A_1) $. Moreover, $\{A_3,A_5\}$, $\{A_4,A_6\}\in E(G)$, that is there exists a cell interval $I$ of $\PP$ containing $A_1,A_3,A_5$ and a cell interval $J$ of $\PP$ containing $A_1,A_4,A_6$, as shown in Figure \ref{fig:cross}. We have that $\{A_3,A_6\} \in E(\bar G)$ and $\gamma$ is not induced. Hence an induced cycle in $\bar{G}$ has length less than or equal to $6$. \\

 \begin{figure}[H]
     \centering
     \begin{tikzpicture}
      \draw (1,0)--(2,0);
      \draw (0,1)--(3,1);
      \draw (0,2)--(3,2);
       \draw (1,3)--(2,3);

      \draw (0,1)--(0,2);
      \draw (1,0)--(1,3);
      \draw (2,0)--(2,3);
       \draw (3,1)--(3,2);
       
       \node at (1.5,0.5){$A_6$};
       \node at (1.5,1.5){$A_1$};
       \node at (0.5,1.5){$A_3$};
       \node at (1.5,2.5){$A_4$};
       \node at (2.5,1.5){$A_5$};
     \end{tikzpicture}
     \caption{}\label{fig:cross}
 \end{figure}
  If $n=5$, then $\{A_1,A_3\}, \{A_1,A_4\} \in E(G)$ and $\{A_3, A_4\}\notin E(G)$, that is there exists a cell interval $I$ of $\PP$ containing $A_1$ and $A_3$ and a cell interval $J$ containing $A_1$ and $A_4$. By similar arguments, there exists a maximal cell interval $I' \neq J$ containing $A_2$ and $A_4$ and $A_3$ and $A_2$ are $1$-connected with change of direction at $A_5$. This implies that either $\{A_1,A_5\} or \{A_4,A_5\}\in E(G)$ and  $\gamma$ is not a cycle. This proves that $n\neq 6$ and $n \neq 5$ as desired.
 \end{proof}
 \begin{Remark}\label{rem:hex}
We highlight that a cycle $\gamma=\{A_1,\ldots , A_6\}$ of length $6$ is given by the hexomino in Figure \ref{fig:hexomino}. That is, whenever the above hexomino is subpolyomino of $\PP$ we have that $\bar G$ is not chordal.
 \begin{figure}[H]
     \centering
     \begin{tikzpicture}
      \draw (0,0)--(2,0);
      \draw (0,1)--(2,1);
      \draw (0,2)--(2,2);
       \draw (0,3)--(2,3);

      \draw (0,0)--(0,3);
      \draw (1,0)--(1,3);
      \draw (2,0)--(2,3);

       \node at (0.5,0.5){$A_1$};
       \node at (1.5,0.5){$A_4$};
       \node at (0.5,1.5){$A_3$};
       \node at (1.5,1.5){$A_6$};
       \node at (0.5,2.5){$A_5$};
       \node at (1.5,2.5){$A_2$};
     \end{tikzpicture}
     \caption{}\label{fig:hexomino}
 \end{figure}
\end{Remark}

We prove some results under the assumption that $\bar G$ is chordal.
\begin{Remark}\label{rem:kconv}
If $\bar G$ is chordal, then any two cells in $\PP$ are $k$-connected with $k < 3$. In fact, assume that $C,D \in \PP$ are $k$-connected with $C_1,\ldots, C_k$ changes of directions and $k>3$, that is $C_1$ and $C_k$ lie on different cell intervals, hence $\{C,C_k, C_1, D\}$ is an induced $4$-cycle of $\bar G$.
\end{Remark}

\begin{Proposition}\label{prop:geq2}
Let $\bar G$ be a chordal graph and assume that there exists $J \in \CC$ such that $|J|>2$. Then for any $I \in \CC \setminus \{J\}$ one has $|I|=2$.
\end{Proposition}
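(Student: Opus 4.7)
The plan is to prove the contrapositive: assuming $J=C_1\cdots C_m$ is horizontal of length $m\geq 3$ in row $r_J$ (which we may assume without loss of generality) and that some $I\in\CC\setminus\{J\}$ has $|I|\geq 3$, I will exhibit an induced cycle of length $\geq 4$ in $\bar G$, contradicting chordality. The argument splits according to the orientation of $I$ and, in the horizontal case, according to the column overlap between $I$ and $J$.

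First, suppose $I=D_1\cdots D_s$ is vertical in column $c_I$ with $s\geq 3$. Since at most one cell of $J$ lies in column $c_I$ and at most one cell of $I$ lies in row $r_J$, we can pick two cells $C_i,C_j\in J$ in columns different from $c_I$ and two cells $D_p,D_q\in I$ in rows different from $r_J$. On these four cells, each of the pairs $\{C_i,D_p\},\{D_p,C_j\},\{C_j,D_q\},\{D_q,C_i\}$ consists of cells in distinct rows and distinct columns, hence non-edges of $G$ and edges of $\bar G$; the two remaining pairs $\{C_i,C_j\}$ (both in the maximal interval $J$) and $\{D_p,D_q\}$ (both in the maximal interval $I$) are edges of $G$ and thus non-edges of $\bar G$. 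Therefore $C_i,D_p,C_j,D_q$ forms an induced $4$-cycle in $\bar G$, a contradiction.

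Second, suppose $I=D_1\cdots D_n$ is horizontal in a row $r_I\neq r_J$ with $n\geq 3$. Let $S_J,S_I$ be the column sets of $J,I$ and set $o=|S_J\cap S_I|$. If $o\leq 2$, a short combinatorial check shows one can still select two cells of $J$ and two cells of $I$ occupying four distinct columns, so the previous reasoning produces an induced $4$-cycle. If $o\geq 3$ and $|r_I-r_J|=1$, then $\PP$ contains the hexomino of Remark \ref{rem:hex} as a subpolyomino, hence $\bar G$ has an induced $6$-cycle. Finally, if $o\geq 3$ and $|r_I-r_J|\geq 2$, classify each shared column $c$ as either a \emph{pillar column} (when $(c,r_J)$ and $(c,r_I)$ lie in a common maximal vertical interval, which then has length $\geq|r_I-r_J|+1\geq 3$) or a \emph{gap column} (when they lie in distinct maximal vertical intervals, so $\{(c,r_J),(c,r_I)\}$ is a non-edge of $G$). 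Pigeonhole forces two of the three shared columns to share a type: two pillar columns reduce to the vertical case above by taking one of these pillar intervals of length $\geq 3$ in place of $I$, while two gap columns $c,c'$ produce the induced $4$-cycle on $(c,r_J),(c,r_I),(c',r_J),(c',r_I)$ in $\bar G$ directly.

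The crux of the proof is that once four cells with the correct distinctness of rows and columns have been selected, the induced $4$-cycle in $\bar G$ drops out for free: the two potential chord pairs automatically lie in common maximal intervals and are therefore non-edges of $\bar G$. The main obstacle is the final subcase $o\geq 3$ with $|r_I-r_J|\geq 2$, where the polyomino may interpolate between $J$ and $I$ in complicated ways through the shared columns; the pillar/gap dichotomy combined with pigeonhole is precisely what enables the reduction either to the vertical case or to a direct $4$-cycle.
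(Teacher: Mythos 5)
Your proof is correct and reaches the conclusion by a genuinely different decomposition than the paper's. The paper splits on whether $I\cap J\neq\varnothing$ (forcing $I\perp J$, where it immediately extracts an induced $4$-cycle from the two cells of each interval away from the common cell) or $I\cap J=\varnothing$, and in the latter case it counts the number $l$ of intervals meeting both $I$ and $J$, invoking Remark \ref{rem:kconv} and Remark \ref{rem:hex} to force $l\leq 2$ and then choosing cells of $I$ and $J$ off those connecting intervals. You instead split on the relative orientation of $I$ and $J$: your perpendicular case handles intersecting and disjoint intervals uniformly (at most one cell of each lies in the other's line, so two cells of each survive and give the induced $4$-cycle), and your parallel case replaces the paper's count of connecting intervals by the column-overlap parameter $o$ together with the pillar/gap dichotomy on shared columns. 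The pillar/gap argument is the more carefully justified of the two treatments of the "far apart, large overlap" configuration --- note that a single pillar column already suffices to reduce to the perpendicular case, so the pigeonhole is only really needed to produce two gap columns. Two small points: (a) your parallel case as stated assumes $r_I\neq r_J$, but two distinct maximal horizontal intervals of length $\geq 3$ can occupy the same row; this is covered by the same selection of four cells in four distinct columns, except that the justification for the cross pairs being non-edges of $G$ is then "different maximal intervals of the same row" rather than "distinct rows and distinct columns". (b) Your argument, like the paper's, relies on adjacency in $G_\PP$ meaning "lying in a common cell interval" (so that cells in the same column but in different maximal vertical intervals are non-adjacent); this is the convention the paper actually uses in its proofs, and your gap-column $4$-cycle depends on it.
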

\begin{proof}
By contraposition, assume that there exists $I \in \CC \setminus \{J\}$ such that $|I|>2$. We distinguish two cases:
\begin{enumerate}
    \item $I\cap J \neq \varnothing$;
    \item $I\cap J = \varnothing$.
\end{enumerate}
In case $(1)$, let $I\cap J= \{C\}$. It follows that 
$|I\setminus \{C\}| \geq 2$ and $|J\setminus \{C\}| \geq 2$, that is there exist $C_1,C_2 \in I$ and $D_1, D_2 \in J$ such that $\{C_1,D_1,C_2,D_2\}$ is an induced $4$-cycle of $\bar{G}$, that is $\bar{G}$ is not chordal.  \\
In case $(2)$, from Remark \ref{rem:kconv}, a cell of $I$ is at most $2$-convex with a cell of $J$, that is let $I_1,\ldots, I_l$ be intervals such that for any $j \in \{1,\ldots, l\}$ $I_j \cap I\neq \varnothing$ and $I_j \cap J\neq \varnothing$. If $l>2$, then the hexomino of Figure \ref{fig:hexomino} is a subpolyomino of $\PP$ and from Remark \ref{rem:hex} $\bar G$ is not chordal. That is either $l=1$ or $l=2$. In the former case there exist $C_1,C_2 \in I\setminus (I_1 \cap I) $ and $D_1, D_2 \in J\setminus (I_1 \cap J)$ such that $\{C_1,D_1,C_2,D_2\}$ is an induced $4$-cycle of $\bar{G}$, that is $\bar{G}$ is not chordal, while in the latter case there exist $C_1,C_2 \in I\setminus (I_1 \cap I) $ and $D_1, D_2 \in J\setminus (I_2 \cap J)$ such that $\{C_1,D_1,C_2,D_2\}$ is an induced $4$-cycle of $\bar{G}$, that is $\bar{G}$ is not chordal.
\end{proof}
 \begin{Corollary}
 Let $\PP$ be a polyomino with a chordal $\bar G$, then $\PP$ is simple.
  \end{Corollary}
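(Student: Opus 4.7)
My plan is to argue by contradiction: I will assume $\PP$ is not simple and produce an induced $4$-cycle in $\bar G_\PP$, contradicting chordality. By definition of simple, the failure means that $\NN^2 \setminus \PP$ has a bounded connected component $H$ (a hole).

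First I would translate $\PP$ inside $\NN^2$, which preserves both $G_\PP$ and the property of being simple, so that the hole $H$ sits strictly in the interior of $\NN^2$; then every cell of $H$ has all four cardinal neighbours in $\NN^2$. I pick any $h=(r,s)\in H$ and walk from $h$ in each of the four cardinal directions. Because any cell of $\NN^2 \setminus \PP$ that is edge-adjacent to $H$ lies in $H$ (it would otherwise be in a different connected component), each walk stays inside $H$ until it steps onto a cell of $\PP$, and since $H$ is finite this exit step must occur. I denote by $A=(r,s+a)$, $B=(r,s-b)$, $C=(r+c,s)$, $D=(r-d,s)$ with $a,b,c,d\geq 1$ the first $\PP$-cells reached going up, down, right and left, respectively.

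The heart of the argument is then immediate: $A$ and $B$ share the column $x=r$, so $\{A,B\}\in E(G_\PP)$, and similarly $\{C,D\}\in E(G_\PP)$; on the other hand each of the mixed pairs $\{A,C\},\{A,D\},\{B,C\},\{B,D\}$ consists of cells in distinct rows and distinct columns, hence is a non-edge of $G_\PP$. Consequently, in $\bar G_\PP$ the four vertices arranged cyclically as $A,C,B,D$ form a $4$-cycle whose only two possible chords $\{A,B\}$ and $\{C,D\}$ are missing, i.e.\ an induced $4$-cycle, contradicting chordality of $\bar G_\PP$. Thus $\PP$ must be simple.

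The one subtle point I expect is the initial translation step. Without forcing $H$ to sit in the interior of $\NN^2$, the downward or leftward walk from $h$ could in principle reach the boundary $y=0$ or $x=0$ of $\NN^2$ before hitting $\PP$, leaving one of $A,B,C,D$ undefined; shifting $\PP$ far from the axes is a harmless combinatorial normalisation that makes all four cells exist and turns the rest of the argument into a one-line verification.
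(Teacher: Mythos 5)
Your strategy --- pick a cell $h$ of a hole, walk in the four cardinal directions to the first cells $A,B,C,D$ of $\PP$, and exhibit $A,C,B,D$ as an induced $4$-cycle of $\bar G_\PP$ --- founders on the edge relation of $G_\PP$. The paper states the definition loosely as ``$C$ and $D$ lie on the same row or column'', but the relation actually in force is that two cells are adjacent in $G_\PP$ exactly when they lie in a common horizontal or vertical \emph{cell interval} of $\PP$, i.e.\ a rook's attack is blocked by cells not in $\PP$. This is the only reading compatible with the identification $f_{i-1}=r_i$ with the rook numbers of the cited papers, with the repeated equating of ``$\{A_i,A_j\}\in E(G)$'' with ``there is a cell interval of $\PP$ containing $A_i$ and $A_j$'' in the lemma on induced cycles of $\bar G$, and with the count $\ell_{i_1}\cdots\ell_{i_k}$ in Theorem \ref{thm:hfbrush}, which presupposes that cells on distinct bristles sharing a row of $\NN^2$ do \emph{not} attack. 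Under that convention your $A$ and $B$ are separated, in their common column, precisely by the hole cell $h\notin\PP$, so they lie in no common cell interval: $\{A,B\}\notin E(G_\PP)$, hence $\{A,B\}$ is an edge of $\bar G_\PP$, i.e.\ a chord of your proposed cycle; likewise $\{C,D\}$. The four cells then induce a complete graph $K_4$ in $\bar G_\PP$, which is chordal, and no contradiction is obtained. The subtlety you flagged (translating $\PP$ away from the boundary of $\NN^2$) is indeed harmless; the real obstruction is that the hole itself severs exactly the two adjacencies your cycle needs.

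This is why the paper takes a different route: it regards the hole as enclosed by a closed path of cells, uses Remark \ref{rem:kconv} to force that path to consist of only $4$ maximal intervals (otherwise two cells are $k$-connected with $k>3$, yielding an induced $4$-cycle in $\bar G$), and then notes that a $4$-interval closed path enclosing a hole must contain two intervals of length greater than $2$, contradicting Proposition \ref{prop:geq2}. Repairing your argument would essentially require reproducing that machinery, so this is a genuine gap rather than a local fix.
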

  \begin{proof}

  In fact, if $\PP$ is non-simple, then it is multiply connected, that is, there exists a closed path of cells. If such closed path has more than $4$ intervals, then there are two cells that are $k$-connected with $k>3$, that is $\bar{G}$ is not chordal due to Remark \ref{rem:kconv}. That is there are $4$ intervals $I_1,\ldots I_4$. Since $\PP$ is non simple then we should have $|I_1|>2$ and $|I_2|>2$, that contradicts Proposition \ref{prop:geq2}.  
  
  \end{proof} 
 For the aim of classifying the polyominoes having a chordal $\bar G$, we first characterize the simple non-thin ones.
\begin{Lemma}
Let $\PP$ be a simple non-thin polyomino. Then $\bar{G}$ is chordal if and only if $\PP$ is one of the two polyominoes in Figure \ref{fig:nnthin}.
\begin{figure}[H]
    \centering
    \begin{subfigure}{0.45 \textwidth}
      \centering
    \begin{tikzpicture}
     \draw (0,0)--(2,0);
\draw (0,1)--(2,1);
\draw (0,2)--(2,2);

\draw (0,0)--(0,2);
\draw (1,0)--(1,2);
\draw (2,0)--(2,2);

    \end{tikzpicture}
    \end{subfigure}\hfill%
    \begin{subfigure}{0.45 \textwidth}
      \centering
        \begin{tikzpicture}
     \draw (0,0)--(3,0);
\draw (0,1)--(3,1);
\draw (0,2)--(2,2);

\draw (0,0)--(0,2);
\draw (1,0)--(1,2);
\draw (2,0)--(2,2);
\draw (3,0)--(3,1);
    \end{tikzpicture}
    \end{subfigure}
    \caption{}\label{fig:nnthin}
\end{figure}
\end{Lemma}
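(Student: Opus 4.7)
The strategy is to prove the two implications separately. For the easier direction $(\Leftarrow)$, I would verify by direct inspection that $\bar G_\PP$ is chordal for each of the two polyominoes: for the $2\times 2$ square, $G_\PP$ is a $4$-cycle, so $\bar G_\PP$ consists only of the two diagonal edges, which is trivially chordal; for the L-pentomino, a short computation shows that the six non-edges of $G_\PP$ form a path on the five vertices, which is also chordal.

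For the forward direction, since $\PP$ is non-thin it contains a $2\times 2$ subpolyomino with cells $A,B$ in the bottom row and $C,D$ in the top row. Let $I_1, I_2$ and $J_1, J_2$ be the maximal horizontal and vertical cell intervals of $\PP$ through these four cells; each has length at least $2$, and by Proposition \ref{prop:geq2} at most one interval of $\PP$ can have length strictly greater than $2$. I would then split into two cases.

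In Case 1, every interval of $\PP$ has length at most $2$, so $I_1, I_2, J_1, J_2$ consist exactly of the two square cells in that row or column. Since $\PP$ is connected, any additional cell would eventually have to be attached to one of $A,B,C,D$ from outside the square, thereby extending one of $I_1, I_2, J_1, J_2$ to length $3$. Hence $\PP$ equals the $2\times 2$ square. In Case 2, exactly one interval has length at least $3$, WLOG $I_1$ (the other cases follow by rotation and reflection). The key claim is that $|I_1|=3$: if $|I_1|\geq 4$, pick two cells $E,F \in I_1\setminus\{A,B\}$; then $\{C,E,D,F\}$ spans an induced $4$-cycle in $\bar G$, since the only candidate chords $CD$ and $EF$ both lie in $G_\PP$ (same row $I_2$ and $I_1$ respectively), contradicting chordality. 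So $I_1$ has length exactly $3$ and $\PP$ contains the L-pentomino.

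To finish Case 2 and conclude that $\PP$ equals the L-pentomino, I would enumerate the positions where an additional cell could be appended adjacent to a cell of this pentomino and rule each one out. Most positions extend $I_2$, $J_1$, or $J_2$ to length $3$ (violating Proposition \ref{prop:geq2}), or create a $2\times 3$ subrectangle (forbidden by Remark \ref{rem:hex}); the remaining positions, such as a cell directly below the third cell of $I_1$, are ruled out by producing an induced $4$-cycle in $\bar G$ analogous to the one used to bound $|I_1|$. The main obstacle I expect is precisely this final bookkeeping step: checking systematically that every possible attachment point to the L-pentomino falls into one of the three categories above. The individual checks are short, but together they require careful enumeration to ensure nothing is missed.
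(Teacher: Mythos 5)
Your proof is correct and follows essentially the same route as the paper's: fix the square tetromino, use Proposition \ref{prop:geq2} together with induced $4$-cycles in $\bar G$ to force the unique longer interval to have length exactly $3$, and then rule out every further cell attached to the resulting pentomino. Two small points to tidy: the complement graph of the L-pentomino has four non-edges (not six), though it is indeed a path on the five vertices and hence chordal; and the WLOG in your Case 2 tacitly assumes the unique interval of length at least $3$ passes through the $2\times 2$ square, which needs the one-line remark that otherwise all four intervals through the square have length $2$, so by connectivity some cell outside the square is edge-adjacent to it and extends one of those four intervals to length $3$, yielding a second long interval and contradicting Proposition \ref{prop:geq2}.
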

\begin{proof}
The graphs $\bar{G}$ associated to the polyominoes of Figure \ref{fig:nnthin} are the graphs in Figure \ref{fig:cho} that are clearly chordal.
\begin{figure}[H]
    \centering
    \begin{subfigure}{0.45 \textwidth}
      \centering
    \begin{tikzpicture}
\filldraw (0,0) circle (\rad);
\filldraw (1,1) circle (\rad);
\filldraw (0,1) circle (\rad);
\filldraw (1,0) circle (\rad);
 \draw (0,0)--(1,1);
 \draw (0,1)--(1,0);
    \end{tikzpicture}
    \end{subfigure}\hfill%
    \begin{subfigure}{0.45 \textwidth}
      \centering
        \begin{tikzpicture}
\filldraw (0,0) circle (\rad);
\filldraw (1,1) circle (\rad);
\filldraw (0,1) circle (\rad);
\filldraw (1,0) circle (\rad);
\filldraw (2,0) circle (\rad);
 \draw (0,0)--(1,1);
 \draw (0,1)--(1,0);
  \draw (2,0)--(1,1);
 \draw (0,1)--(2,0);
    \end{tikzpicture}
    \end{subfigure}
    \caption{}\label{fig:cho}
\end{figure}

We now assume that $\bar G$ is chordal and $\PP$ strictly contains the square tetromino $\QQ=\{C_1,C_2,C_3,C_4\}$ with $\{C_1,C_2\}$ non-attacking and let $C \in \PP \setminus \QQ$. Let $I$ be the cell interval containing $C$. If there exists $D \in I$ such that $D \notin \{C_1,C_2,C_3,C_4\}$, then either $\{C,D,C_1,C_4\}$ or  $\{C,D,C_1,C_3\}$ or $\{C,D,C_2,C_3\}$  or $\{C,D,C_2,C_4\}$ is an induced $4$-cycle of $\bar G$. That is, if $I\setminus {C} \subseteq \{C_1,C_2,C_3,C_4\}$, assume $I=\{C,C_2,C_3\}$, as in Figure \ref{fig:pent}.
\begin{figure}[H]
    \centering
        \begin{tikzpicture}
     \draw (0,0)--(3,0);
\draw (0,1)--(3,1);
\draw (0,2)--(2,2);

\draw (0,0)--(0,2);
\draw (1,0)--(1,2);
\draw (2,0)--(2,2);
\draw (3,0)--(3,1);
  \node at (0.5,1.5){$C_1$};
       \node at (1.5,0.5){$C_2$};
       \node at (0.5,0.5){$C_3$};
       \node at (1.5,1.5){$C_4$};
       \node at (2.5,0.5){$C$};
   
    \end{tikzpicture}
    \caption{}\label{fig:pent}
\end{figure}
We assume that there exists $D \in \PP \setminus \{C_1,C_2,C_3,C_4,C\}$ and let $J \in \CC$ be such that $D \in J$. It follows that either $|J|>3$ or $J=\{C,D\}$ and $J \cap \{C_1,C_4\}= \varnothing$. In the former case, both $I$ and $J$ have cardinality greater than $3$, contradicting Proposition \ref{prop:geq2}. In the latter case, $C_1$ and $D$ are $3$-connected contradicting Remark \ref{rem:kconv}.
That is, if $\bar{G}$ is chordal $\PP$ is one of the two polyominoes in Figure \ref{fig:nnthin}.
\end{proof}

We are left with the characterization of the simple thin polyominoes having a chordal $\bar{G}$.  We introduce a new class of polyominoes.
\begin{Definition}
A simple thin polyomino $\PP$ such that $\CC=\{J,I_1,\ldots, I_l\}$ with $0\leq l \leq |J|$ and for any $k \in \{1,\ldots , l\}$ $I_k\cap J \neq \varnothing$ is called a \emph{brush polyomino}(see Figure \ref{fig:brush}). If in addition for any $k \in \{1,\ldots , l\}$ $|I_k|=2$, then is $\PP$ is called a \emph{short brush polyomino}(see Figure \ref{fig:sbrush}).
\begin{figure}[H]
\centering
\begin{subfigure}[t]{0.5 \textwidth}
    \centering
        \begin{tikzpicture}
\draw (-1,0)--(6,0);
\draw (-1,1)--(6,1);

\draw (1,2)--(2,2);
\draw (1,3)--(2,3);
\draw (2,-1)--(3,-1);
\draw (2,-2)--(3,-2);
\draw (2,-3)--(3,-3);
\draw (5,2)--(4,2);
\draw (5,-1)--(6,-1);

\draw (-1,0)--(-1,1);
\draw (0,0)--(0,1);
\draw (1,0)--(1,3);
\draw (2,-3)--(2,3);
\draw (3,-3)--(3,1);
\draw (4,0)--(4,2);
\draw (5,-1)--(5,2);
\draw (6,-1)--(6,1);

    \end{tikzpicture}
    \caption{A brush polyomino}\label{fig:brush}
    \end{subfigure}%
\begin{subfigure}[t]{0.5 \textwidth}
    \centering
        \begin{tikzpicture}
\draw (-1,0)--(6,0);
\draw (-1,1)--(6,1);

\draw (1,2)--(2,2);
\draw (3,-1)--(4,-1);
\draw (5,2)--(4,2);
\draw (5,-1)--(6,-1);

\draw (-1,0)--(-1,1);
\draw (0,0)--(0,1);
\draw (1,0)--(1,2);
\draw (2,0)--(2,2);
\draw (3,-1)--(3,1);
\draw (4,-1)--(4,2);
\draw (5,-1)--(5,2);
\draw (6,-1)--(6,1);

    \end{tikzpicture}
    \caption{A short brush polyomino}\label{fig:sbrush}
    \end{subfigure}
\end{figure}
\end{Definition}

\begin{Theorem}
Let $\PP$ be a simple thin polyomino. Then $\bar{G}$ is chordal if and only if $\PP$ is a short brush polyomino.
 \end{Theorem}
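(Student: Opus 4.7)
Plan: Prove both implications. For the "if" direction, directly analyze the structure of $G_\PP$ and $\bar G$ for a short brush and verify that $\bar G$ contains no induced cycle of length $\geq 4$. For the "only if" direction, use Proposition~\ref{prop:geq2} and Remark~\ref{rem:kconv} to constrain the shape of $\PP$, and construct an explicit induced $4$-cycle in $\bar G$ whenever $\PP$ deviates from a short brush.

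For the "if" direction, let $\PP$ be a short brush with spine $J$ (say horizontal) and bristles $I_1,\ldots,I_l$, writing $I_k=\{C_{f(k)},D_k\}$ where $C_{f(k)}\in J$ and $D_k$ lies in row $\pm 1$. From the prescribed form $\CC=\{J,I_1,\ldots,I_l\}$ one deduces: (a) at most one bristle attaches at each cell of $J$, otherwise a vertical interval of size $\geq 3$ through that cell would appear in $\CC$; and (b) two bristles on the same side of $J$ have non-consecutive feet in $J$, otherwise their tips would form a new size-$2$ horizontal interval in $\CC$. Hence in $G_\PP$, $V_J:=J$ is a clique (all its cells lie in the single interval $J$), each bristle contributes a pendant edge $C_{f(k)}-D_k$, and no other edges appear. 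In $\bar G$, $V_J$ becomes an independent set, $V_B:=\{D_1,\ldots,D_l\}$ becomes a clique, and the $V_J$-$V_B$ edges are precisely the non-bristle pairs. Any induced cycle of $\bar G$ must have its $V_B$-vertices cyclically consecutive and in number at most $2$ (else two non-consecutive $V_B$-vertices or three consecutive ones provide a chord via the $V_B$-clique), and at most one $V_J$-vertex (since $V_J$ is independent, no two cyclically adjacent $V_J$-vertices are allowed). These constraints force the cycle length to be at most $3$, so $\bar G$ is chordal.

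For the converse, let $\PP$ be simple thin with chordal $\bar G$. By Proposition~\ref{prop:geq2}, at most one element of $\CC$ has cardinality $>2$; if such an interval exists, call it $J$. In the case $|J|\geq 3$, every other interval is a domino. Suppose for contradiction that a domino $I$ satisfies $I\cap J=\varnothing$. Connectedness of $\PP$ produces a chain of intervals $I=I_0,I_1,\ldots,I_n=J$ with consecutive members sharing a cell; the corresponding path has exactly $n$ direction changes, so Remark~\ref{rem:kconv} forces $n\leq 2$. Since $I\cap J=\varnothing$, one cannot have $n\leq 1$, hence $n=2$ and the chain reduces to $I,K,J$ with a single bridging interval $K$. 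Because $K$ shares a cell with $I$ and with $J$, $K$ must be perpendicular to each, forcing $I$ and $J$ to be parallel. Placing $J$ horizontally in row $0$, set $Q:=K\cap J=(q,0)$ and $P:=K\cap I=(q,-1)$, and write $I=\{P,P'\}$ with $P'=(q+1,-1)$ (the case $P'=(q-1,-1)$ is symmetric). Thinness rules out $(q+1,0)\in\PP$, else $\{P,P',Q,(q+1,0)\}$ is the square tetromino, so $J$ does not extend to the right of $Q$; since $|J|\geq 3$ one finds $Q_1=(q-1,0)$ and $Q_2=(q-2,0)$ in $J$. The set $\{P',Q_1,P,Q_2\}$ is then the vertex set of an induced $4$-cycle in $\bar G$: each cycle edge joins cells lying in distinct rows and distinct columns, while the diagonals $P'$-$P$ and $Q_1$-$Q_2$ lie in the cell intervals $I$ and $J$ respectively, hence are edges of $G_\PP$ and thus non-edges of $\bar G$. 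This contradicts chordality, so every other interval meets $J$; the bound $l\leq|J|$ follows because two bristles meeting $J$ at the same cell would yield a vertical interval of size $\geq 3$ different from $J$, again contradicting Proposition~\ref{prop:geq2}. If instead every interval of $\PP$ has size $2$, each cell has degree $\leq 2$ in $G_\PP$, so $G_\PP$ is a path ($G_\PP$ cannot be a cycle without forcing a $2\times 2$ square into $\PP$); Remark~\ref{rem:kconv} limits the number of direction changes along this path to $\leq 2$, so $\PP$ is a chain of at most three dominoes and is directly seen to be a short brush (the middle domino plays the role of $J$ when three appear).

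The technical heart is the induced $4$-cycle construction in the main case of the converse. Thinness must be used together with the position of $P'$ to determine uniquely the side in which $J$ extends from $Q$, providing the two witness cells $Q_1,Q_2\in J$, and then both diagonals of the prospective $4$-cycle must be verified to lie inside a common cell interval of $\PP$. Once coordinates are fixed the verification is routine, but the two symmetric sub-cases for $P'$ and the need to rule out perpendicular $I,J$ via the chain-length argument are the subtle points.
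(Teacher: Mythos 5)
Your proof is correct and follows essentially the same route as the paper's: the ``if'' direction kills induced cycles of $\bar G$ using the fact that every interval of a short brush meets the spine $J$, and the ``only if'' direction combines Proposition~\ref{prop:geq2} and Remark~\ref{rem:kconv} with an explicit induced $4$-cycle built from a domino disjoint from $J$; your version is in fact more detailed (you verify all four cross-pairs of the $4$-cycle via the distinct rows/columns observation, and you treat the all-domino case and the $l\leq |J|$ bound explicitly). The one loose point is the claim that the minimal interval chain $I=I_0,\ldots,I_n=J$ has $n\le 2$: Remark~\ref{rem:kconv} bounds the minimum number of direction changes over all cell paths between two \emph{fixed cells}, not the number of changes along the particular path your chain induces, so $n\le 2$ does not follow verbatim (a short path may enter $I$ or leave $J$ through a perpendicular interval, so interval-distance $3$ between $I$ and $J$ is a priori compatible with all cell pairs being $2$-connected). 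This is harmless: if $n\ge 3$, the domino $I_{n-2}$ is itself disjoint from $J$, parallel to it, and bridged to it by the single interval $I_{n-1}$, so your own $4$-cycle construction applied to $I_{n-2}$ already yields the contradiction --- and the paper is no more careful at this exact spot, where it simply asserts the existence of the bridging interval from ``$\PP$ is simple thin''.
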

\begin{proof}

Assume $\PP$ is a brush polyomino and that $\{A_1,A_2,A_3,A_4\}$ is an induced cycle of $\bar{G}$ with $A_1$ and $A_3$ on a cell interval, and $A_2$ and $A_4$ on another cell interval. From the structure of $\PP$ it follows that one between $A_1$ and $A_3$ lies on $J$, say $A_3$. Similarly, wlog assume $A_2 \in J$, that is $A_2,A_3 \in E(G)$ and $\bar{G}$ is not a cycle.\\
We now assume that $\PP$ is a simple thin polyomino with a chordal $\bar G$. From Remark \ref{rem:kconv}, we have that $\PP$ is $k$-connected with $k \leq 2$. We now distinguish two cases.
\begin{enumerate}
    \item for any $I \in \CC$ it holds $|I|=2$;
    \item there exists $J \in \CC$ such that $|J|>2$. 
\end{enumerate}

In case (1), we have that $|\CC|\leq 3$, in fact the only polyominoes satisfying the above property are subpolyominoes of the skew tetromino in Figure \ref{fig:skew}.  In fact, if $\CC$ contains another cell interval $I$ with $|I|=2$, then $I$ contains $C$ (resp. $D$) and another cell $A$ that is $3$-connected to $D$ (resp. $C$).
\begin{figure}[H]
    \centering
            \begin{tikzpicture}
\draw (0,0)--(2,0);
\draw (0,1)--(3,1);
\draw (1,2)--(3,2);

\draw (0,0)--(0,1);
\draw (1,0)--(1,2);
\draw (2,0)--(2,2);
\draw (3,1)--(3,2);

\node at (0.5,0.5){$C$};
\node at (2.5,1.5){$D$};
    \end{tikzpicture}
    \caption{The skew tetromino}
    \label{fig:skew}
\end{figure}
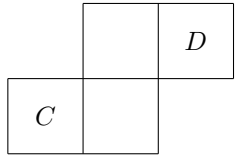
In case $(2)$, from Proposition \ref{prop:geq2}, we obtain that $\forall I \in \CC\setminus \{J\}$ one has $|I|=2$. We are left with proving that $\forall I \in \CC\setminus \{J\}$ we have $I \cap J \neq \varnothing$. Assume $I \cap J= \varnothing$. Since  $I=\{C_1C_2\}$ and $\PP$ is simple thin, then there exists $I_1 \in \CC$ with $|I_1|=2$ such that $I_1=\{C_2 C\}$ with $C \in J$. Since $|J \setminus\{C\}|\geq 2$, then there exists $D_1, D_2 \in J$ such that $\{C_1,D_1,C_2,D_2\}$ is an induced $4$-cycle of $\bar{G}$, that is $\bar{G}$ is not chordal. This leads to a contradiction. It follows that $\PP$ is a brush polyomino.
\end{proof}

From Theorem \ref{fr} one obtains the following 
\begin{Corollary}
Let $\PP$ be a simple polyomino. Then $\reg R/I(G_\PP) \leq 1$ if and only if $\PP$ is a brush polyomino or $\PP$ is one of the polyominoes in Figure \ref{fig:nnthin}.
\end{Corollary}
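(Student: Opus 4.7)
The plan is to observe that this corollary is a direct synthesis of Theorem \ref{fr} (Fröberg) with the chordality characterizations established earlier in Section \ref{sec:chord}. By Theorem \ref{fr} applied to $G_\PP$, the bound $\reg R/I(G_\PP) \leq 1$ holds if and only if $\bar G_\PP$ is chordal, so the whole problem reduces to classifying simple polyominoes $\PP$ with chordal complement graph.

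I would then split into two cases depending on whether $\PP$ is thin or not. If $\PP$ is non-thin, the preceding lemma (on simple non-thin polyominoes) says that $\bar G_\PP$ is chordal precisely when $\PP$ is one of the two polyominoes displayed in Figure \ref{fig:nnthin}. If $\PP$ is thin, the preceding theorem says that $\bar G_\PP$ is chordal precisely when $\PP$ is a (short) brush polyomino. Combining these two cases and pulling back through Fröberg's theorem yields exactly the list in the statement.

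The argument is genuinely a one-line deduction; there is no real obstacle beyond citing the correct previous results in the correct order. The only subtle point I would flag is the matching of terminology: the characterization in the thin case produces \emph{short} brush polyominoes, and one should check (or note by invoking Proposition \ref{prop:geq2}) that any brush polyomino with a longer bristle would force $|J| > 2$ and some $|I_k| > 2$ simultaneously, hence could not appear. So in writing the proof I would make clear that ``brush polyomino'' in the statement is to be understood in the short-bristle sense dictated by the preceding theorem, and then simply string together the three equivalences
\[
\reg R/I(G_\PP) \leq 1 \iff \bar G_\PP \text{ chordal} \iff \PP \text{ is either a short brush polyomino or one of the polyominoes in Figure \ref{fig:nnthin}}.
\]
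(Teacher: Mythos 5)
Your proof is correct and is exactly the paper's (implicit) argument: the paper derives this corollary directly from Theorem \ref{fr} combined with the lemma for simple non-thin polyominoes and the theorem for simple thin polyominoes, with no further work. Your side remark about terminology is well taken and points at a genuine imprecision in the statement rather than a gap in your proof: the thin case of the classification yields \emph{short} brush polyominoes, and a brush polyomino with two intervals of length at least $3$ (e.g.\ an L-shaped pentomino with both arms of length $3$) has a non-chordal complement by Proposition \ref{prop:geq2}, so ``brush polyomino'' in the corollary must indeed be read as ``short brush polyomino'' (up to the choice of which interval plays the role of the handle $J$).
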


\section{The Castelnuovo-Mumford regularity of pure brush polyominoes}\label{sec:cmreg}
In this section, we compute the Castelnuovo-Mumford regularity of $R/I(G_\PP)$ for pure brush polyominoes $\PP$. We are motivated by the following observations. 

\begin{Remark}
Let $\PP$ be a simple thin polyomino. We observe that $G_\PP$ contains no induced cycles, hence is chordal and by \cite[Corollary 7.(2)]{Wo} $\RR_\PP$ is vertex decomposable.

\end{Remark}
\begin{Corollary}
Let $\PP$ be a simple thin polyomino. Then the following are equivalent
\begin{itemize}
    \item[(i)] $\RR_\PP$ is pure;
        \item[(ii)]  $\RR_\PP$ is Cohen-Macaulay;
                \item[(iii)] $\RR_\PP$ is vertex decomposable.
\end{itemize}
\end{Corollary}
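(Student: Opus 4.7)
The plan is to observe that two of the three implications are built into the preliminaries and that the remaining implication is essentially packaged into the Remark preceding the corollary. Concretely, from the chain
\[
\Delta \text{ Vertex Decomposable} \Rightarrow \Delta \text{ Cohen--Macaulay} \Rightarrow \Delta \text{ Pure}
\]
recalled in Subsection~2.2, one gets (iii)$\Rightarrow$(ii)$\Rightarrow$(i) for free, so the only content lies in closing the loop (i)$\Rightarrow$(iii).

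For (i)$\Rightarrow$(iii), the strategy is to invoke the preceding Remark, which notes that for a simple thin polyomino $\PP$ the graph $G_\PP$ contains no induced cycles and is therefore chordal. Woodroofe's theorem (\cite[Corollary 7.(2)]{Wo}) then gives that the independence complex $\RR_\PP$ of $G_\PP$ is (non-pure) vertex decomposable. Under the standing assumption that $\RR_\PP$ is pure, non-pure vertex decomposability coincides with the notion of vertex decomposability recorded in Subsection~2.2, and hence $\RR_\PP$ is vertex decomposable in the sense of the paper.

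The only subtle point to make explicit is the compatibility between the two versions of vertex decomposability: Woodroofe works in the generality of not-necessarily-pure complexes, while the definition used by the author explicitly requires pureness. The simple observation closing the argument is that the recursive clauses (existence of a shedding vertex $x$ with both $\link_{\Delta}(x)$ and $\del_{\Delta}(x)$ vertex decomposable, and the facet-preservation condition) are the same in both settings, so as soon as $\RR_\PP$ is pure there is no ambiguity, and Woodroofe's conclusion yields (iii).

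I would expect the main (and indeed only) obstacle to be a careful reading of the definitions, to confirm that the Remark's appeal to \cite{Wo} can be promoted from shellability/non-pure vertex decomposability to the pure vertex decomposability prescribed in the preliminaries once (i) is assumed. Beyond this bookkeeping the corollary is immediate from the Remark plus the standard implications, so no further combinatorial analysis of $\PP$ is required.
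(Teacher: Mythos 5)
Your proposal is correct and follows exactly the route the paper intends: the paper gives no separate proof of the corollary, deriving it immediately from the preceding Remark (chordality of $G_\PP$ plus Woodroofe's result) together with the standard chain Vertex Decomposable $\Rightarrow$ Cohen--Macaulay $\Rightarrow$ Pure. Your extra remark reconciling the non-pure vertex decomposability of \cite{Wo} with the pure definition in the preliminaries is a sensible piece of bookkeeping that the paper leaves implicit.
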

Therefore, if $\RR_\PP$ is pure and $h(\RR_\PP)=(h_0,\ldots, h_r)$, then $r= \reg R/I(G)$. Hence, for the rest of the section we focus on the class of pure brush polyominoes (see Figure \ref{fig:pbru}). 
\begin{figure}[H]
    \centering
           \begin{tikzpicture}
\draw (0,0)--(5,0);
\draw (0,1)--(5,1);

\draw (0,-2)--(0,1);
\draw (1,-2)--(1,3);
\draw (1,-2)--(0,-2);
\draw (1,-1)--(0,-1);

\draw (1,2)--(2,2);
\draw (1,2)--(2,2);
\draw (1,3)--(2,3);
\draw (2,-1)--(3,-1);
\draw (2,-2)--(3,-2);
\draw (2,-3)--(3,-3);
\draw (4,2)--(3,2);
\draw (4,-1)--(5,-1);

\draw (2,-3)--(2,3);
\draw (3,-3)--(3,1);
\draw (3,0)--(3,2);
\draw (4,-2)--(4,2);
\draw (5,-2)--(5,1);
\draw (5,-2)--(4,-2);
   
    \end{tikzpicture}
    \caption{A pure brush polyomino}\label{fig:pbru}
\end{figure}
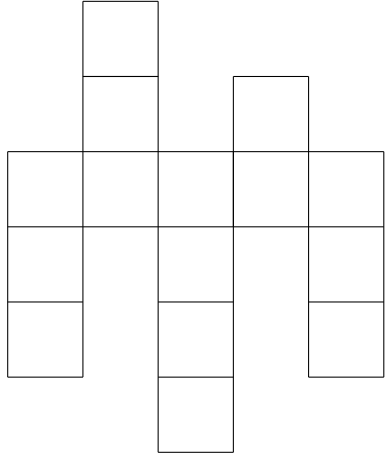

If follows that a pure brush polyomino with $\dim \RR_\PP=d-1$ is such that $\AA=\{I_1,\ldots I_d\}$ with $|I_k|=\ell_k$ for any $k \in \{1,\ldots , d\}$, $\CC=\AA \cup \{J\}$ with $|J|=d$, and $I_k\cap J \neq \varnothing$ for any $k \in \{1,\ldots , d\}$. Let $\ell=(\ell_1,\ldots,\ell_d)$.
We want to study the vectors $f(\RR_\PP)$ and $h(\RR_\PP)$ for a pure brush polyomino $\PP$.

We recall that for $1 \ldots k \ldots d$ the $k$-th elementary symmetric polynomial in $d$ indeterminates $X_1,\ldots X_d$ is 
\[
\epsilon_{k}^{(d)}(X_1,\ldots,X_d)=\sum\limits_{1 \leq i_1 < i_2 < \ldots < i_k\leq d} X_{i_1}X_{i_2}\cdots X_{i_k}.
\]
For any  $1 \ldots k \ldots d$  we set 
\begin{align*}
&\sigma_{k}=\epsilon_{k}(\ell_1,\ldots,\ell_d)\\
&\sigma'_{k}=\epsilon_{k}(\ell_1-1,\ldots,\ell_d-1),\\
&\sigma''_{k}=\epsilon_{k}(\ell_1-2,\ldots,\ell_d-2),\\
\end{align*}

\begin{Lemma}\label{lem:sigma}
For any $d \in \NN$ and $1\leq k \leq d$ the following relations hold
\begin{enumerate}[label=(\roman*)]

    \item $\sigma_{k}'=\sum\limits_{i=0}^{k} (-1)^{k-i} \binom{d-i}{k-i}\sigma_i$
    \item $\sigma_k=\sum\limits_{i=0}^{k} \binom{d-i}{k-i}\sigma'_i$
    \item $\sigma_{k}''=\sum\limits_{i=0}^{k} (-1)^{k-i} \binom{d-i}{k-i}\sigma'_i$ 
\end{enumerate}
\end{Lemma}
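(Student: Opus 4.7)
The plan is to prove all three identities uniformly via the generating polynomial
\[
P(t) \;=\; \prod_{i=1}^{d}(t+\ell_i) \;=\; \sum_{k=0}^{d} \sigma_k\, t^{d-k},
\]
together with its shifted analogues $P'(t)=\prod_i (t+\ell_i-1)=\sum_k \sigma_k' t^{d-k}$ and $P''(t)=\prod_i(t+\ell_i-2)=\sum_k \sigma_k'' t^{d-k}$. The three statements will then follow from the trivial relations $P'(t)=P(t-1)$, $P(t)=P'(t+1)$, and $P''(t)=P'(t-1)$, after expanding via the binomial theorem and comparing coefficients.

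For (i), I would write
\[
\sum_{k=0}^{d}\sigma_k' t^{d-k} \;=\; P(t-1) \;=\; \sum_{j=0}^{d}\sigma_j(t-1)^{d-j} \;=\; \sum_{j=0}^{d}\sigma_j\sum_{m=0}^{d-j}\binom{d-j}{m}(-1)^{d-j-m}t^{m},
\]
and extract the coefficient of $t^{d-k}$. Setting $m=d-k$ and using the identity $\binom{d-j}{d-k}=\binom{d-j}{k-j}$, the inner sum becomes $\sum_{j=0}^{k}(-1)^{k-j}\binom{d-j}{k-j}\sigma_j$, which is exactly the right-hand side of (i). For (ii), the same manipulation with $P(t)=P'(t+1)$ yields the signless version, since now we expand $(t+1)^{d-j}$ and all signs are positive. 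For (iii), the cleanest argument is to note that $\sigma_k''$ and $\sigma_k'$ stand in the same relation to each other as $\sigma_k'$ and $\sigma_k$ do in (i): applying (i) to the shifted tuple $(\ell_1-1,\dots,\ell_d-1)$ in place of $(\ell_1,\dots,\ell_d)$ gives (iii) immediately.

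There is no real obstacle here beyond bookkeeping of indices. The only subtle point is ensuring the binomial identity $\binom{d-j}{d-k}=\binom{d-j}{k-j}$ is used correctly and that the summation range truncates at $k$ (terms with $j>k$ contribute $0$ since $\binom{d-j}{k-j}=0$ when $k<j$). The structure of the argument mirrors exactly the passage between the $f$-vector and the $h$-vector encoded in equations \eqref{eq:hf} and \eqref{eq:fh}, which is not a coincidence: it suggests that in the subsequent use of the lemma, $\sigma_k'$ will play the role of an $h$-type quantity and $\sigma_k$ the role of an $f$-type one for the rook complex of the pure brush polyomino.
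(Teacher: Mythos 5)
Your argument is correct and complete, but it takes a genuinely different route from the paper. The paper works term by term: it expands each product $(\ell_{i_1}-1)\cdots(\ell_{i_k}-1)$ via Vieta's formulas into signed elementary symmetric polynomials of the chosen $k$-subset, then sums over all $k$-subsets of $\{1,\dots,d\}$ and counts that a fixed monomial $\prod_{a\in A}\ell_a$ with $|A|=i$ occurs in $\binom{d-i}{k-i}$ of these subsets; relation (ii) is then obtained only by analogy with the passage between \eqref{eq:hf} and \eqref{eq:fh}, and (iii) "similarly." You instead package all three identities into the single observation that $\sum_k\sigma_k t^{d-k}=\prod_i(t+\ell_i)$ and that shifting every $\ell_i$ by $-1$ is the substitution $t\mapsto t-1$; coefficient extraction after a binomial expansion then gives (i), the inverse substitution $t\mapsto t+1$ gives (ii) as an honest computation rather than an appeal to analogy, and (iii) is (i) applied to the shifted tuple. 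What your approach buys is economy and rigor at the two points where the paper is weakest: the multiplicity count (whose indices $i$ and $j$ are garbled in the paper's displayed formula) is replaced by a mechanical coefficient comparison, and (ii) no longer rests on a comparison with the $f$--$h$ relations. Your closing remark that this substitution is precisely the $f$-to-$h$ transformation is also accurate and explains why the paper felt entitled to that analogy in the first place. The only bookkeeping point worth stating explicitly is the convention $\sigma_0=\sigma_0'=\sigma_0''=1$, which both arguments use silently.
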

\begin{proof}
We consider a subset $\{i_1,\ldots i_k\}\subseteq \{1,\ldots, d\}$ and we consider the number
\begin{equation}\label{eq:ell}
    (\ell_{i_1}-1)(\ell_{i_2}-1)\cdots (\ell_{i_k}-1)
\end{equation}

For any $i \in \{1,\ldots, k\}$ we set 
\[
\sigma_{i}^{(k)}=\epsilon_{i}^{(k)}(\ell_{i_1}, \ldots ,\ell_{i_k}),
\]
hence, from Vieta's formulas, Equation \eqref{eq:ell} becomes
\[
\sum_{i=0}^k (-1)^{k-i} \sigma^{(k)}_{i}
\]
We now consider
\[
\sigma_{k}'=\sum\limits_{1 \leq i_1 < i_2 < \ldots < i_k\leq d} (\ell_{i_1}-1)(\ell_{i_2}-1)\cdots (\ell_{i_k}-1)=\sum\limits_{1 \leq i_1 < i_2 < \ldots < i_k\leq d} \ \ \sum_{i=0}^k (-1)^{k-i} \sigma^{(k)}_{i}
\]
We observe that fixed $\{i_1,\ldots ,i_k\}$ and $A \subseteq \{i_1,\ldots ,i_k\}$ with $|A|=i$, the summand 
\[
\prod\limits_{a \in A} \ell_a
\]
appears $\binom{d-j}{k-j}$ times in $\sigma'_{k}$. That is 
\[
\sigma_{k}'=\sum\limits_{1 \leq i_1 < i_2 < \ldots < i_k\leq d} \ \ \sum_{i=0}^k (-1)^{k-i} \sigma^{(k)}_{i}=\sum_{i=0}^k (-1)^{k-i} \binom{d-j}{k-j} \sigma_{k}
\]
and relation $(i)$ follows. Similarly, relation $(iii)$ follows.

Relation $(ii)$ follows from Relation $(i)$, as a comparision with the relation between Equation \ref{eq:hf} and Equation \ref{eq:fh}.
\end{proof}

Moreover we have 
\begin{Theorem}\label{thm:hfbrush}
Let $\PP$ be a pure brush polyomino with $\dim \RR_\PP=d-1$. Then the following relations hold
\begin{enumerate}
    \item for all $k \in \{1,\ldots d\}$ \[f_{k-1}=\sigma'_{k}+(d-(k-1))\sigma'_{k-1}\]
    \item for all $t \in \{0,\ldots d\}$ \[h_{t}=\sigma''_{t}+(d-(t-1))\sigma''_{t-1}\]
\end{enumerate}
\end{Theorem}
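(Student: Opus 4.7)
The plan is to prove (1) by direct enumeration of the $k$-faces of $\RR_\PP$, splitting on their intersection with the handle $J$, and then to deduce (2) from (1) via Equation~\eqref{eq:hf} and Lemma~\ref{lem:sigma}(iii).

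For (1), the crucial observation is that any two cells of $J$ are collinear and so attack each other; hence a face of $\RR_\PP$ meets $J$ in at most one cell. I would split the $k$-faces into two classes. If a face $F$ satisfies $F\cap J=\varnothing$, then its $k$ cells lie in $k$ distinct bristles (two cells in the same bristle would attack), and for each chosen bristle $I_{i_j}$ there are $\ell_{i_j}-1$ admissible cells off $J$. Summing $\prod_j(\ell_{i_j}-1)$ over all $k$-subsets of $\{1,\ldots,d\}$ yields exactly $\sigma'_k$ such faces. If instead $F$ contains the cell $C_t=I_t\cap J$ for some $t$, then the remaining $k-1$ cells must avoid both $J$ and the bristle $I_t$, so they occupy $k-1$ of the other $d-1$ bristles, one cell per bristle. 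For fixed $t$ this count is the $(k-1)$-th elementary symmetric polynomial in the $d-1$ variables $\ell_j-1$ with $j\neq t$; summing over $t$ and exchanging the order of summation, each $(k-1)$-subset $S\subseteq\{1,\ldots,d\}$ is counted once for each of the $d-(k-1)$ indices $t\notin S$, giving a total of $(d-(k-1))\sigma'_{k-1}$. Adding the two contributions proves (1).

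For (2), I would substitute (1) into $h_t=\sum_{i=0}^{t}(-1)^{t-i}\binom{d-i}{t-i}f_{i-1}$. The $\sigma'_i$ part is exactly $\sigma''_t$ by Lemma~\ref{lem:sigma}(iii). For the $(d-(i-1))\sigma'_{i-1}$ part, after shifting the summation index via $i=j+1$ (the $i=0$ term vanishes because $\sigma'_{-1}=0$), the required identity reduces to the binomial relation
\[
(d-j)\binom{d-j-1}{t-1-j}=(d-t+1)\binom{d-j}{t-1-j},
\]
which follows immediately from the factorial definitions. A second application of Lemma~\ref{lem:sigma}(iii) at level $t-1$ identifies the resulting sum as $(d-(t-1))\sigma''_{t-1}$, finishing (2).

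The main obstacle is essentially bookkeeping: in the second case of (1), correctly identifying which bristles remain available once the handle cell is fixed; and in (2), shifting the summation index and extracting the factor $(d-t+1)$ cleanly. Neither step is conceptually subtle, but both must be handled with care, in particular treating the boundary conventions $\sigma'_{-1}=\sigma''_{-1}=0$.
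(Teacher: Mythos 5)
Your proof is correct. For part (1) you take a slightly more direct route than the paper: you enumerate the faces of $\RR_\PP$ themselves, splitting on whether $F\cap J$ is empty or a single cell (it cannot be larger, since any two cells of $J$ attack), and read off the two terms $\sigma'_k$ and $(d-(k-1))\sigma'_{k-1}$ immediately. The paper instead starts from the full count $\sigma_k$ of all one-rook-per-bristle placements over $k$ chosen bristles, subtracts the attacking configurations (those with at least two cells on $J$) grouped by how many cells lie on $J$, and then needs Lemma~\ref{lem:sigma}(ii) to convert $\sigma_k - \sum_{j=0}^{k-2}\binom{d-j}{k-j}\sigma'_j$ into the stated form. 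The underlying decomposition by $|F\cap J|$ is the same, but your direct count of the admissible configurations bypasses the complementary counting and the appeal to Lemma~\ref{lem:sigma}(ii) entirely, which is a genuine (if modest) simplification; your implicit use of the fact that cells in distinct bristles and off $J$ never attack is exactly what the brush structure $\CC=\{J,I_1,\ldots,I_d\}$ guarantees, so the count is exact. For part (2) your argument — substituting (1) into Equation~\eqref{eq:hf}, shifting the index, using $(d-j)\binom{d-j-1}{t-1-j}=(d-t+1)\binom{d-j}{t-1-j}$, and applying Lemma~\ref{lem:sigma}(iii) twice with the convention $\sigma'_{-1}=0$ — is identical to the paper's.
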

\begin{proof}
We prove relation $(1)$, by first observing that for any $i=2,\ldots , d$ we have
\[
f_{k-1}=\sigma_{k} - \Bigg(\sum\limits_{j=0}^{k-2} \binom{d-j}{k-j} \sigma'_{j} \Bigg).
\]
We have that for any $\{i_1,\ldots , i_k\} \subset \{1,\ldots , d\}$, we have
\[
\ell_{i_1} \cdots \ell_{i_k}
\]
configurations of rooks. From this number for any $0\leq j \leq k-2$ we have to subtract the configurations that have exactly $k-j$ cells on the common interval $J$. Fixed a subset $A=\{a_{1},\ldots a_{k-j} \} \subset \{i_1,\ldots ,i_k\}$, the configurations containing the cells in $J \cap \{I_{a_1}\}, \ldots , J \cap \{I_{a_{k-j}}\} $ are 
\[
\prod\limits_{t \in \{i_1,\ldots , i_k\}\setminus A} (\ell_t - 1)
\]
Let $\SS_{k-j}$ be the set of cardinality $k-j$ subsets of $\{i_1,\ldots , i_k\}$. We consider
\[
\sum\limits_{1 \leq i_1 < i_2 < \ldots < i_k\leq d} \ \  \sum_{A \in \SS_{k-j}} 
\prod\limits_{t \in \{i_1,\ldots , i_k\}\setminus A} (\ell_t - 1).
\]
Since the $j$ elements of $\{i_1,\ldots , i_k\}\setminus A$ are fixed, then in the above sum any product $\binom{d-j}{k-j}$ is counted times, and their sum retrieves $\sigma'_{j}$.
Hence,

\[
f_{k-1}= \sum\limits_{1 \leq i_1 < i_2 < \ldots < i_k\leq d} \ell_{i_1} \cdots \ell_{i_k} - \Bigg( \sum_{j=0}^{k-2} \sum_{A \in \SS_{k-j}} 
\prod\limits_{t \in \{i_1,\ldots , i_k\}\setminus A} (\ell_t - 1).
\Bigg)=
\]
\[
=\sum\limits_{1 \leq i_1 < i_2 < \ldots < i_k\leq d} \ell_{i_1} \cdots \ell_{i_k} - \Bigg( \sum_{j=0}^{k-2} \ \  \sum\limits_{1 \leq i_1 < i_2 < \ldots < i_k\leq d} \ \  \sum_{A \in \SS_{k-j}} 
\prod\limits_{t \in \{i_1,\ldots , i_k\}\setminus A} (\ell_t - 1).
\Bigg)=
\]
\[
=\sigma_{k}- \sum_{j=0}^{k-2} \binom{d-j}{k-j} \sigma'_{j}.
\]
According to relation $(ii)$ of Lemma \ref{lem:sigma}, we have that 
\[
f_{k-1}=\sum\limits_{j=k-1}^k  \binom{d-j}{k-j} \sigma'_{j}= \sigma'_{k}+(d-(k-1))\sigma'_{k-1}.
\]

To prove relation (2), we consider Equation \eqref{eq:hf}, that is
\[
h_{t}=\sum_{k=0}^t (-1)^{t-k} \binom{d-k}{t-k} f_{k-1}=
\]
\[
=(-1)^{t} \binom{d}{t} f_{-1} + \sum_{k=1}^t (-1)^{t-k} \binom{d-k}{t-k} f_{k-1}
\]
By using relation $(1)$, we obtain 
\[
=(-1)^{t} \binom{d}{t} f_{-1} + \sum_{k=1}^t (-1)^{t-k} \binom{d-k}{t-k} (\sigma'_{k}+(d-(k-1))\sigma'_{k-1})=
\]
one observes that $f_{-1}=\sigma'_{0}=1$
\[
=\sum_{k=0}^t (-1)^{t-k} \binom{d-k}{t-k}\sigma'_{k} + \sum_{k=1}^t (-1)^{t-k}\binom{d-k}{t-k}(d-k+1)\sigma'_{k-1}= (*)
\]
We observe that 
\[
(d-k+1)\binom{d-k}{t-k}= (d-k+1)\frac{(d-k)!}{(t-k)!(d-t)!} \cdot \frac{d-t+1}{d-t+1}= (d-t+1)\binom{d-k+1}{t-k}
\]
that is 
\[
(*)=\sum_{k=0}^t (-1)^{t-k} \binom{d-k}{t-k}\sigma'_{k} +(d-t+1)\sum_{k=1}^t (-1)^{t-k} \binom{d-k+1}{t-k} \sigma'_{k-1}=
\]
In the second sum, we substitue $j=k-1$ and we obtain
\[
=\sum_{k=0}^t (-1)^{t-k} \binom{d-k}{t-k}\sigma'_{k} +(d-t+1) \sum_{j=0}^{t-1} (-1)^{t-j+1} \binom{d-j}{t-1-j} \sigma'_{j}.
\]
By using relation (iii) of Lemma \ref{lem:sigma}, we obtain that 
\[
h_{t}=\sigma''_{t}+(d-t+1)\sigma''_{t-1},
\]
as desired.
\end{proof}

We translate the definition of induced matching number for the graph $G$ in terms of the intervals of $\PP$.
\begin{Definition}\label{def:matp}
An induced matching is a set of edges  $\{\{A_1,B_1\},\ldots , \{A_n,B_n\}\}$ such that for any $j,k \in \{1,\ldots, n\}$, if $\{A_j,B_j\} \subset I_j \in \CC$ and $\{A_k,B_k\} \subset I_k \in \CC$, then there is no $J \in \CC$ such that 
\[
J \cap I_j \subset \{A_j,B_j\} \mbox{ and } J \cap I_k \subset \{A_k,B_k\}.
\]
\end{Definition}

For a polyomino $\PP$, we set $\SS= \{I \in \CC : I \mbox{ has at least 2 single cells} \}$

\begin{Lemma}\label{lem:match}
Let $\PP$ be a simple polyomino. 
Then
\[
\nu(G) \geq |\SS|.
\]
\end{Lemma}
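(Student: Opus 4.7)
The plan is to construct an explicit induced matching $M$ in $G_\PP$ of size $|\SS|$, which yields $\nu(G_\PP)\geq |\SS|$ at once. The guiding observation is that a single cell of an interval $I$ belongs to no cell interval of $\PP$ other than $I$ (among those of attacking size $\geq 2$), so every $G_\PP$-edge through it must have its other endpoint inside $I$; this rigidity will block the accidental edges that would spoil the induced property.

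For each $I\in \SS$ I would fix two of its single cells $A_I,B_I$, which exist precisely because $I\in \SS$. The pair $\{A_I,B_I\}$ is an edge of $G_\PP$ since both cells lie on the common interval $I$, and distinct choices of $I$ yield vertex-disjoint pairs: a cell single in $I$ cannot simultaneously be single in a different interval. Set $M=\{\{A_I,B_I\}:I\in \SS\}$.

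The main point is to verify that $M$ is induced in the sense of Definition~\ref{def:matp}. Suppose for contradiction that there are distinct $I,I'\in \SS$ and $J\in \CC$ with $J\cap I\subseteq \{A_I,B_I\}$ and $J\cap I'\subseteq \{A_{I'},B_{I'}\}$, both intersections nonempty (which is the meaningful case of the definition, since empty intersections render the containment vacuous). Pick $X\in J\cap I$ and $Y\in J\cap I'$. Because $X$ is single in $I$, the only cell interval of $\CC$ of attacking size containing $X$ is $I$ itself, forcing $J=I$. The symmetric argument gives $J=I'$, whence $I=I'$, contradicting $I\neq I'$. Thus $M$ is an induced matching of size $|\SS|$ in $G_\PP$.

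I do not anticipate a real obstacle here: once the notion of single cell is unpacked, the verification is essentially formal, and the simplicity of $\PP$ is used only implicitly, through the standard row/column structure of cell intervals on which $G_\PP$ is built. Any subtlety lies in the reading of Definition~\ref{def:matp}, specifically in insisting that $J\cap I$ and $J\cap I'$ be nonempty; with that convention the argument runs as above.
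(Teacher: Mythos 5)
Your construction is exactly the paper's: pick two single cells $A_I,B_I$ from each $I\in\SS$ and take the resulting pairs as the matching; the paper simply asserts that this is an induced matching, whereas you justify it by noting that a single cell lies on no attacking interval other than its own, which correctly rules out any $J\in\CC$ meeting two distinct pairs. So the proposal is correct and follows essentially the same route, only spelling out the verification (and the nonemptiness reading of Definition~\ref{def:matp}) that the paper leaves implicit.
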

\begin{proof}
Let $\SS=\{I_1,\ldots, I_m\}$. For any $j \in \{1,\ldots , m\}$, let $A_j$ and $B_j$ be two single cells of $I_j$. It follows that 
\[
\{\{A_1,B_1\},\ldots , \{A_m,B_m\}\}
\]
is an induced matching, hence $\nu(G) \geq |\SS|$.
\end{proof}

\begin{Corollary}
Let $\PP$ be a pure brush polyomino. Then
\[
\reg R/I(G)= \nu (G).
\]
\end{Corollary}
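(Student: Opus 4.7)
The plan is to identify $\reg R/I(G_\PP)$ with the degree $r$ of the $h$-polynomial of $\RR_\PP$ (using Cohen--Macaulayness, available for simple thin $\PP$ from the corollary just above), and then to match $r$ with an induced matching of the correct size built from Lemma \ref{lem:match} and one extra edge when needed.

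\textbf{Reading off $r$.} A brush polyomino is simple thin, and $\RR_\PP$ is pure by hypothesis, so by the preceding corollary $\RR_\PP$ is Cohen--Macaulay; hence $\reg R/I(G_\PP)$ equals the degree $r$ of $h(\RR_\PP)$. In a brush, each bristle $I_k$ meets $J$ in exactly one cell, so $I_k$ has $\ell_k-1$ single cells and $I_k\in\SS$ if and only if $\ell_k\geq 3$. Set $m=|\SS|=|\{k:\ell_k\geq 3\}|$. The arguments of $\sigma''_j=\epsilon_j(\ell_1-2,\ldots,\ell_d-2)$ are either $0$ (when $\ell_k=2$) or positive (when $\ell_k\geq 3$), so $\sigma''_j>0$ for $j\leq m$ and $\sigma''_j=0$ for $j>m$. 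Plugging into Theorem \ref{thm:hfbrush}(2), $h_t=\sigma''_t+(d-t+1)\sigma''_{t-1}$, gives $h_m>0$, $h_{m+1}=(d-m)\sigma''_m$, and $h_{m+2}=0$. Therefore $r=m$ when every bristle has length $\geq 3$ (i.e.\ $d=m$), while $r=m+1$ when at least one bristle has length $2$ (i.e.\ $d>m$).

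\textbf{Matching $\nu(G_\PP)$ to $r$.} Katzman's Theorem \ref{kat} already yields $\nu(G_\PP)\leq r$. Lemma \ref{lem:match} provides an induced matching of size $m$ formed by one edge between two single cells in each bristle of length $\geq 3$, which closes the case $r=m$. When $r=m+1$, choose a bristle $I_l=\{C_l,A_l\}$ with $\ell_l=2$ and $C_l=I_l\cap J$, and adjoin the edge $\{C_l,A_l\}$ to the $m$ bristle-edges above. Each endpoint of those $m$ edges is a single cell of some $I_k$ with $\ell_k\geq 3$, hence lies in no interval of $\CC$ other than $I_k$; in particular it is neither in $J$ nor in $I_l$. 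Consequently no chord of $G_\PP$ arises between $\{C_l,A_l\}$ and the earlier endpoints, and we obtain an induced matching of size $m+1=r$. Combining with the upper bound gives $\nu(G_\PP)=r=\reg R/I(G_\PP)$.

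The only delicate step is verifying that the extra edge $\{C_l,A_l\}$ is compatible with the $m$ bristle-edges in the induced matching sense. This reduces to the elementary fact that the only intervals of $\CC$ through $C_l$ are $J$ and $I_l$, neither of which contains a single cell of any other bristle.
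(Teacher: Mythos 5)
Your proof is correct and follows essentially the same route as the paper: identify $\reg R/I(G_\PP)$ with the degree of $h(\RR_\PP)$ via Cohen--Macaulayness, use Theorem \ref{thm:hfbrush} and the vanishing of $\sigma''_k$ to locate that degree according to whether a length-$2$ bristle exists, and exhibit an induced matching of the same size (the $\SS$-edges plus one extra edge from a short bristle). Your version is slightly more explicit in computing the exact degree $r$ and in checking that the extra edge keeps the matching induced, but the argument is the same.
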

\begin{proof}
Let $\PP$ be a pure brush polyomino with $\CC=\{J,I_1,\ldots , I_d\} $. We observe that if $l_k \geq 3$ then $I_k$ has two single cell. We distinguish two cases
\begin{enumerate}
    \item for any $k \in \{1,\ldots, d\}$ $\ell_k \geq 3$; 
    \item there exists $j \in \{1,\ldots, d\}$ such that $\ell_j = 2$; 
\end{enumerate}
In case (1), from Lemma \ref{lem:match} we have that $d=\nu(G)$, that is from Theorem \ref{kat}
\[
\nu(G) \leq \reg R/I(G) \leq d,
\]
and the assertion follows.\\
In case (2), we relabel the intervals $I_1,\ldots I_d$ in a way such that 
\[
\ell_{1},\ldots , \ell_{t} > 3 \mbox{ and } \ell_{t+1}=\ldots = \ell_{d}=2
\]
for $t<d$. In this case we have $\nu(G)=t+1$. In fact, since $\SS={I_1,\ldots , I_t}$, then we have $t$ edges in an induced matching. To this we add the unique edge arising from $I_{t+1}$. Combining the facts $I_{k} \cap J\neq \varnothing$ for any $k \in \{1,\ldots, d\}$ and  $\ell_{t+1}=\ldots = \ell_{d}=2$, we obtain that the intervals $I_1,\ldots,I_{t+1}$ give rise to an induced matching. We prove that $h_k = 0$ for $k>t+1$.

Let $k>t+1$. In any cardinality-$k$ subset of $\ell_1-2, \ldots, \ell_d-2$ there is a $0$, that is
\[
\sigma''_{k}=\sigma''_{k-1}=0.
\]
From relation (2) of Theorem \ref{thm:hfbrush}, we obtain $h_k=0$ as desired.
\end{proof}

As a conclusion, in this paper we characterize the polyominoes having a pure rook complex. It could be of interest finding a characterization for those polyominoes having a Cohen-Macaulay (shellable, vertex decomposable) rook complex, and among them finding the Gorenstein ones.

\end{document}